\theoremstyle{plain}
\newtheorem{theorem}{Theorem}
\newtheorem{lemma}[theorem]{Lemma}
\newtheorem{corollary}[theorem]{Corollary}
\newtheorem{conjecture}[theorem]{Conjecture}
\def\modd#1 #2{#1\ \mbox{{\rm (mod}} \ #2\mbox{\rm )}}
\DeclareMathOperator{\qp}{qp}
\DeclareMathOperator{\qd}{qd}
\DeclareMathOperator{\tr}{tr}
\DeclareMathOperator{\qs}{qs}
\DeclareMathOperator{\qt}{qt}
\DeclareMathOperator{\qr}{qr}
\DeclareMathOperator{\li}{li}
\newtheorem*{qrpNOTdiscrepeat}{Lemma~\ref{qrNOTdiscpee}}
\begin{document}

\title{Quadratic sequences with prime power discriminators}

\author{Sajed Haque\\
School of Computer Science \\
University of Waterloo \\
Waterloo, ON  N2L 3G1 \\
Canada \\
{\tt s24haque@cs.uwaterloo.ca}
}

\date{\today}

\maketitle

\begin{abstract}
The discriminator of an integer sequence $\textbf{s} = (s(i))_{i \geq 0}$, introduced by Arnold, Benkoski, and McCabe in 1985, is the function $D_{\textbf{s}} (n)$ that sends $n$ to the least integer $m$ such that the numbers $s(0), s(1), \ldots, s(n - 1)$ are pairwise incongruent modulo $m$.  In this note, we try to determine all quadratic sequences whose discriminator is given by $p^{\lceil \log_p n \rceil}$ for prime $p$, i.e., the smallest power of $p$ which is $\geq n$. We determine all such sequences for $p = 2$, show that there are none for $p \geq 5$, and provide some partial results for $p = 3$.
\end{abstract}

\section{Discriminators}

Let $S$ be a set of integers. If there is some integer $m$ for which the elements of $S$ are pairwise incongruent modulo $m$, we say that $m$ \textit{discriminates} $S$. Now let $\textbf{s} = (s(i))_{i \geq 0}$ be a
sequence of distinct integers. For all integers $n \geq 1$, we define $D_\textbf{s} (n)$ to be the least positive integer $m$ that discriminates the set $\{s(0), s(1), \ldots, s(n - 1)\}$.  The function $D_\textbf{s} (n)$ is called the \textit{discriminator} of the sequence \textbf{s}. 

The discriminator was first introduced by Arnold, Benkoski, and McCabe \cite{ABM85}. They derived the discriminator for the sequence $1, 4, 9, \ldots$ of positive integer squares. More recently, discriminators of various sequences were studied by Schumer and Steinig \cite{SchumerSteinig88}, Barcau \cite{Barcau88}, Schumer \cite{Schumer90}, Bremser, Schumer, and Washington \cite{BSW90},
Moree and Roskam \cite{MoreeRoskam95}, Moree \cite{Moree96}, Moree and Mullen \cite{MoreeMullen96}, Zieve \cite{Zieve98}, Sun \cite{Sun13},
Moree and Zumalac\'arrequi \cite{MoreeZumalacarregui16}, Haque and Shallit \cite{HaqueShallit16}, Ciolan and Moree \cite{CiolanMoree19}, Haque and Shallit \cite{HaqueShallit19}, and Faye, Luca, and Moree \cite{FLM19}.

In most cases, different sequences also had different discriminators. However, Sun \cite{Sun13} showed that the triangular numbers $(k(k - 1)/2)_{k \geq 1}$ and the sequence $(k(2k - 1))_{k \geq 0}$ both have discriminators given by $2^{\lceil \log_2 n \rceil}$ for the first $n$ terms. Furthermore, Haque and Shallit showed that the odious numbers also have the discriminator given by $2^{\lceil \log_2 n \rceil}$ \cite{HaqueShallit16}, and later presented a class of exponential sequences with this same discriminator \cite{HaqueShallit19}.

On a related note, the Salajan sequence was shown, by Moree and Zumalac\'arrequi to be discriminated by $2^{\lceil \log_2 n \rceil}$ and $5^{\lceil \log_5 (5n/4)\rceil}$, with the discriminator being the minimum of the two \cite{MoreeZumalacarregui16}. Haque and Shallit showed that the discriminator for the evil numbers is $2^{\lceil \log_2 n \rceil}$, except when $n = 2^i + 1$ for some $i \geq 2$. 

Sun also showed that $k (3k - 1)_{k \geq 0}$ has discriminator $3^{\lceil \log_3 n \rceil}$ \cite{Sun13}.

In this paper, we explore the quadratic sequences whose discriminators are given by $p^{\lceil \log_p n \rceil}$ for some fixed prime $p$. We provide a complete characterization of such sequences for the case of $p = 2$, show that there are no such sequences for $p \geq 5$, and provide some partial results for $p = 3$.

The outline of the paper is as follows. 
Section~\ref{two} describes the general approach for proving the results, with useful observations and lemmas. 
Section~\ref{three} characterizes the case of $p = 2$, fully describing all quadratic sequences with discriminator $2^{\lceil \log_2 n \rceil}$. 
Section~\ref{four} considers the case of $p \geq 5$, showing that there are no quadratic sequences with discriminator $p^{\lceil \log_p n \rceil}$ for $p \geq 5$.
Finally, Section~\ref{five} explores the case of $p = 3$, with a set of necessary conditions and a set of sufficient conditions for quadratic sequences to have discriminator $3^{\lceil \log_3 n \rceil}$.

\section{Approach}
\label{two}

We denote quadratic sequences by $(q (n))_{n \geq 0} = (\alpha n^2 + \beta n + \gamma)_{n \geq 0}$, for rational numbers $\alpha$, $\beta$, and $\gamma$. Our approach involves exploiting the property that for any $n \geq 1$, the discriminator $D_q (n)$ is the smallest integer that does not divide $q(j) - q(i)$ for all pairs of integers $i$ and $j$ such that $0 \leq i < j < n$. Here we can see that
\begin{align}
	q (j) - q (i) &= \alpha j^2 + \beta j - \alpha i^2 - \beta i = \alpha (j^2 - i^2) + \beta (j - i) \nonumber \\
	&= (j - i)(\alpha(i + j) + \beta).
	\label{quaddiff}
\end{align}
Eq.~\eqref{quaddiff} is used to prove various results in later sections. Some of these results are used to show that some quadratic sequences have discriminator $D_q (n) = p^{\lceil \log_p n \rceil}$ for some prime $p$. This is accomplished by applying the following lemma:

\begin{lemma}
	Let $p \geq 2$ be a prime number and let $(s (n))_{n \geq 0}$ be a sequence of distinct integers that satisfies the following conditions:
	\begin{enumerate}
		\item For all pairs of integers $k$ and $m$ such that $k \geq 0$ and $0 \leq m < p^{k + 1}$, there exists a pair of integers $i$ and $j$ such that $0 \leq i < j \leq p^k$ and $m | s (j) - s (i)$;
		\item For all integers $k$, $i$, and $j$ such that $k \geq 0$ and $0 \leq i < j < p^{k + 1}$, we have $p^{k + 1} \nmid s (j) - s(i)$. 
	\end{enumerate}
	Then $D_s (n) = p^{\lceil \log_p n \rceil}$ for $n > 0$.
	\label{generaldisc}
\end{lemma}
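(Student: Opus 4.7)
My plan is to prove the two directions separately: show that $p^{\lceil \log_p n \rceil}$ does discriminate $\{s(0),\ldots,s(n-1)\}$, and that no smaller positive integer does. Throughout, set $K = \lceil \log_p n \rceil$, so that by definition $p^{K-1} < n \leq p^K$ when $n \geq 2$ (the $n = 1$ case is trivial since $p^0 = 1$ discriminates any one-element set).

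For the upper bound ($p^K$ is a valid discriminator), I invoke condition 2 with $k = K - 1$: it asserts that $p^K \nmid s(j) - s(i)$ for every $0 \leq i < j < p^K$. Since $n \leq p^K$, the inequality $0 \leq i < j < n$ implies $0 \leq i < j < p^K$, so $p^K$ discriminates the first $n$ terms. This direction is clean and uses condition 2 exactly once.

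For the lower bound (no positive $m < p^K$ works), I fix such an $m$ and apply condition 1 with $k = K - 1$, which is legal because $1 \leq m < p^K = p^{k+1}$. This yields indices $0 \leq i < j \leq p^{K-1}$ with $m \mid s(j) - s(i)$. The key observation is that $p^{K-1} < n$ by the definition of the ceiling, so $j < n$, placing both indices inside $\{0,1,\ldots,n-1\}$ and exhibiting a collision modulo $m$. Hence $m$ fails to discriminate and $D_s(n) \geq p^K$.

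Combining the two bounds gives $D_s(n) = p^K = p^{\lceil \log_p n \rceil}$. I do not expect any serious obstacle: the lemma is essentially a repackaging of the definition of the discriminator, and the only subtlety is lining up the indexing so that condition 1 (applied at level $k = K-1$) produces a pair of indices strictly below $n$, which works precisely because $p^{K-1} < n$. The $m = 0$ case in condition 1 is vacuous since the sequence has distinct terms, so it plays no role.
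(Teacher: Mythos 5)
Your proof is correct and follows essentially the same route as the paper: condition 1 (at level $k = K-1$, with the observation $p^{K-1} \leq n-1$) rules out every positive $m < p^K$, and condition 2 (at the same level, using $n-1 < p^K$) shows $p^K$ discriminates, with the $n=1$ case handled trivially. No issues to report.
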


\begin{proof}
	The case $n = 1$ follows from the fact that $D_s (1) = 1$ regardless of the given conditions. Otherwise, let $k \geq 0$ be such that $p^k < n \leq p^{k + 1}$. From the first condition, we know that for all $0 \leq m < p^{k + 1}$, there exists a pair of integers $i$ and $j$ such that $0 \leq i < j \leq p^k \leq n - 1$ and $m | s (j) - s (i)$. This means that $m$ does not discriminate the set 
	\begin{equation*}
		\{s (0), s (1), \ldots, s (n - 1)\}, 
	\end{equation*}
	and thus, $D_s (n) \geq p^{k + 1}$ for all $p^k < n \leq p^{k + 1}$.
	
	Furthermore, from the second condition, we know that $p^{k + 1} \nmid s (j) - s (i)$ as long as $0 \leq i < j < p^{k + 1}$. So for $p^k < n \leq p^{k + 1}$, we know $p^{k + 1}$ cannot divide $s (j) - s (i)$ for all $i$, $j$ in the range $0 \leq i < j \leq n - 1$ since $n - 1 < p^{k + 1}$. Therefore, $D_s(n) = p^{k + 1} = p^{\lceil \log_p n \rceil}$.
\end{proof}

Other results in later sections involve scenarios in which $D_s (n) \neq p^{\lceil \log_p n \rceil}$ for some prime $p$ and integer $n \geq 1$. This is achieved with the help of the following lemma:

\begin{lemma}
	Let $p \geq 2$ be a prime number and let $(q (n))_{n \geq 0} = (\alpha n^2 + \beta n + \gamma)_{n \geq 0}$ be a quadratic sequence such that $\alpha$, $\beta$, and $\gamma$ are integers that satisfy any of the following conditions:
	\begin{enumerate}
		\item $p \nmid \alpha$;
		\item $p | \beta$;
		\item $\alpha = p^k c$ for some integer $c$ such that $c \nmid \beta$.
	\end{enumerate}
	Then there exists a value of $n > 0$ such that $D_q (n) \neq p^{\lceil \log_p n \rceil}$.
	\label{generalNOTdisc}
\end{lemma}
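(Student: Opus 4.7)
The plan is to handle the three conditions separately, in each case exhibiting a specific $n > 0$ at which $D_q(n) \ne p^{\lceil \log_p n \rceil}$. The central tool is Eq.~\eqref{quaddiff}, which gives $q(j) - q(i) = (j-i)\bigl(\alpha(i+j) + \beta\bigr)$, combined with the observations that $D_q(n) > p^{\lceil \log_p n \rceil}$ follows once one produces a pair $0 \le i < j \le n-1$ with $p^{\lceil \log_p n\rceil} \mid q(j) - q(i)$, and $D_q(n) < p^{\lceil \log_p n \rceil}$ follows once one produces a smaller $m$ that discriminates $\{q(0), \ldots, q(n-1)\}$.

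Condition~2 is immediate: at $n = p+1$ the pair $(0, p)$ gives $q(p) - q(0) = p(\alpha p + \beta)$, divisible by $p^2 = p^{\lceil \log_p(p+1)\rceil}$ since $p \mid \beta$ and $p \mid \alpha p$. Condition~1 splits by the parity of $p$. For odd $p$, take $n = p$: the hypothesis $p \nmid \alpha$ makes $\alpha x + \beta \equiv 0 \pmod p$ solvable at a unique residue $k^* \pmod p$, and the sums $i + j$ for $0 \le i < j \le p-1$ hit every residue class modulo $p$, so some such pair gives $p \mid q(j) - q(i)$ and hence $D_q(p) > p$. For $p = 2$ (so $\alpha$ is odd), the parity of $\beta$ decides: if $\beta$ is odd then $\alpha + \beta = q(1) - q(0)$ is even and $D_q(2) > 2$, while if $\beta$ is even then $4 \mid q(2) - q(0) = 4\alpha + 2\beta$ and $D_q(3) > 4$.

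Condition~3 is the main obstacle. I interpret $c$ as the $p$-free part of $\alpha$, so that $\gcd(c, p) = 1$; then the assumption $c \nmid \beta$ produces a prime $\ell \ne p$ with $a := v_\ell(c) > b := v_\ell(\beta)$. Since $v_\ell(\alpha) = a > b$, the $\beta$-term dominates in $\alpha(i+j) + \beta$ at $\ell$, yielding the key identity
\[
v_\ell\bigl(q(j) - q(i)\bigr) = v_\ell(j - i) + b.
\]
It follows that $\ell^{\,b + \lceil \log_\ell n \rceil}$ discriminates $\{q(0), \ldots, q(n-1)\}$, giving the uniform bound $D_q(n) \le \ell^{\,b + \lceil \log_\ell n \rceil}$. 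Taking $n = \ell^t$, the desired strict inequality $\ell^{b+t} < p^{\lceil t \log_p \ell \rceil}$ rearranges to $\{t \log_p \ell\} < 1 - b\log_p \ell$; when $\ell^b < p$ the right side is positive, and the irrationality of $\log_p \ell$ furnishes a suitable $t$ by equidistribution. The residual regime $\ell^b \ge p$, where the $\ell$-power bound is too blunt, is the main technical hurdle: I plan to handle it by choosing $n$ near a threshold $p^s + 1$ and producing a prime $m \in (p^s, p^{s+1})$ coprime to $\alpha$ whose unique root of $\alpha x + \beta \equiv 0 \pmod m$ falls outside $[0, 2n-2]$, so that $m$ itself discriminates $\{q(0), \ldots, q(n-1)\}$ and forces $D_q(n) < p^{s+1}$.
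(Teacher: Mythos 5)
Your handling of conditions 1 and 2 is correct (the paper does both more uniformly, choosing $i=0$, $j=-\beta\alpha^{-1}\bmod p^{\ell}$ resp.\ $j=p^{\ell-1}$ to defeat $p^{\ell}$, but your parity case-split for $p=2$ and the residue-covering argument for odd $p$ are fine, noting that what you actually get is $D_q(n)\neq p^{\lceil\log_p n\rceil}$ rather than a strict inequality, which is all the lemma needs). For condition 3 the paper's route is much shorter than yours: it picks a prime $r$ with $r\mid c$ and $r\nmid\beta$ and observes that $r$ then divides neither $j-i$ nor $\alpha(i+j)+\beta$ for $0\le i<j<r$, so $r$ discriminates the first $r$ terms and $D_q(r)\le r<p^{\lceil\log_p r\rceil}$. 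In your notation that is exactly a witness prime $\ell$ with $b=v_\ell(\beta)=0$, where no equidistribution is needed (take $n=\ell$ itself). Your valuation identity and the bound $D_q(n)\le\ell^{\,b+\lceil\log_\ell n\rceil}$ are correct, and your equidistribution argument genuinely extends coverage to witnesses with $0<b$ and $\ell^{b}<p$, a case the paper's argument does not literally address.

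The genuine gap is the residual regime $\ell^{b}\ge p$ (i.e.\ when every prime $\ell$ with $v_\ell(c)>v_\ell(\beta)$ also satisfies $\ell^{v_\ell(\beta)}>p$): there you offer only a plan, and the plan cannot work as stated, at least for $p=2$. If $p^{s}<n\le p^{s+1}$ and $m<p^{s+1}$ is any candidate modulus, the root of $\alpha x+\beta\equiv 0\pmod m$ lies in $[0,m-1]$; for $p=2$ one has $m-1<2^{s+1}-1\le 2n-3$, so the root can never be pushed outside $[0,2n-2]$, and the ``$m$ discriminates the first $n$ terms'' mechanism is structurally unavailable. For $p=3$ the same tension is why the paper's analytic device (Lemmas~\ref{zcounter}--\ref{qpupperbound}, which force the root above $2p^{\ell}$ via primes with prescribed leading digit and residue) is only proved under $p^{k}\ge 5$; it does not transfer to your situation, where the obstructing prime $\ell$ is different from $p$ and the relevant linear form is $\alpha x+\beta$ with small leading modulus. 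A concrete test case is $p=2$, $\alpha=2\cdot 3^{2}=18$, $\beta=3$: your only witness is $\ell=3$ with $b=1$, so $\ell^{b}=3>2$, and neither your argument nor your sketched repair produces an $n$ with $D_q(n)\neq 2^{\lceil\log_2 n\rceil}$ (every prime or near-prime modulus in $(2^{s},2^{s+1})$ has its root within reach of $i+j\le 2^{s+1}-3$, so it fails to discriminate). You should be aware that the paper's own case 3 is silent here too, since it presupposes a prime dividing $c$ but not $\beta$, which this example lacks; so closing your gap is not a matter of routine tightening, and you should either find a genuinely new argument for this regime or restrict the hypothesis (e.g.\ to the existence of a prime $\ell$ with $\ell\mid c$, $\ell\nmid\beta$, or $\ell^{v_\ell(\beta)}<p$) before claiming the full statement.
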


\begin{proof}
	We can assume $\gamma = 0$ since the discriminator does not depend on $\gamma$. We now consider the conditions one by one, while recalling from Eq.~\eqref{quaddiff} that $q (j) - q (i) = (j - i)(\alpha(i + j) + \beta)$ for all pairs of integers $i$ and $j$.
	
	\begin{description}
		\item[Case 1: $p \nmid \alpha$.] For any integer $\ell \geq 2$, we show that there exists a pair of $i$ and $j$ such that $0 \leq i < j < p^\ell$ and $p^\ell | q(j) - q(i)$. Since $p \nmid \alpha$, this implies that $\alpha$ and $p^\ell$ are co-prime. We choose $i = 0$ and $j = -\beta(\alpha)^{-1} \bmod p^\ell$. Then
		\begin{align*}
			q(j) - q(i) = j(\alpha j + \beta) &\equiv \modd{j(\alpha (-\beta)(\alpha)^{-1} + \beta)} {p^\ell}\\
			&\equiv j(-\beta + \beta) \equiv \modd{0} {p^\ell}.
		\end{align*}
		Since $p^\ell | q(j) - q(i)$ while $0 \leq i < j < p^\ell$, it follows that $D_{q} (p^\ell) \neq p^\ell$ and so, $D_q (n) \neq p^{\lceil \log_p n \rceil}$ for $n = p^\ell$.
		\item[Case 2: $p | \alpha$ and $p | \beta$.] Again, for any integer $\ell \geq 2$, we show that there exists a pair of $i$ and $j$ such that $0 \leq i < j < p^\ell$ and $p^\ell | q(j) - q(i)$. Here, we choose $i = 0$ and $j = p^{\ell - 1}$ to get
		\begin{equation*}
			q(j) - q(i) = (j - i)(\alpha(i + j) + \beta) = p^{\ell - 1}(p^{\ell - 1}\alpha + \beta) = p^\ell(p^{\ell - 2}\alpha + \frac{\beta}{p}),
		\end{equation*}
		noting that $p^{\ell - 2}$ and $\frac{\beta}{p}$ are integers. Just as with Case 1, this implies that $D_{q} (p^\ell) \neq p^\ell$ and so $D_q (n) \neq p^{\lceil \log_p n \rceil}$ for $n = p^\ell$.
		\item[Case 3: $p | \alpha$, $p \nmid \beta$, but $c \nmid \beta$, where $\alpha = p^k c$ for $p \nmid c$.] Let $r$ be any prime number such that $r | c$ and $r \nmid \beta$. For any pair of integers $i$ and $j$ such that $0 \leq i < j < r$, we have $q(j) - q(i) = (j - i)(\alpha(i + j) + \beta)$. Since $j < r$, we have $r \nmid (j - i)$. We also have $r \nmid \alpha(i + j) + \beta$ since $r | \alpha$ but $r \nmid \beta$. Therefore, $r \nmid q(j) - q(i)$. It follows that $D_q (r) \leq r < p^{\lceil \log_p r \rceil}$, i.e., $D_q (n) \neq p^{\lceil \log_p n \rceil}$ for $n = r$.
	\end{description}
	In all cases, we have $D_q (n) \neq p^{\lceil \log_p n \rceil}$ for at least one value of $n \geq 1$.
\end{proof}

Furthermore, to help generalize some results, we cite a useful lemma from Haque and Shallit \cite{HaqueShallit19}.

\begin{lemma}
	Given a sequence $s(0), s(1), \ldots$ and a non-zero integer $c$, let $s'(0), s'(1), \ldots$ denote the sequence such that $s'(i) = c s(i)$ for all $i \geq 0$. Then we have $D_{s'} (n) = D_s (n)$ for every $n$ such that $\gcd(|c|, D_s (n)) = 1$.
	\label{coprimescaledisc}
\end{lemma}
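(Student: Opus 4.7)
The plan is to work directly with the identity $s'(j) - s'(i) = c(s(j) - s(i))$, which lets me translate divisibility statements about differences in $s'$ into ones about $s$ and vice versa. An integer $m$ discriminates the first $n$ terms of a sequence $t$ exactly when $m \nmid t(j) - t(i)$ for all $0 \le i < j < n$, so the goal reduces to comparing which moduli $m$ discriminate $s$ to those that discriminate $s'$.

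First, I would establish two one-sided implications. In one direction, if $m$ discriminates $s'$, then $m \nmid c(s(j) - s(i))$ for all $i \neq j$, which immediately forces $m \nmid s(j) - s(i)$; so every modulus discriminating $s'$ also discriminates $s$, with no coprimality hypothesis. In the other direction, suppose $\gcd(|c|, m) = 1$ and $m$ discriminates $s$. If $m \mid c(s(j) - s(i))$ for some $i \neq j$, then coprimality forces $m \mid s(j) - s(i)$, contradicting the assumption; hence $m$ also discriminates $s'$.

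With these two facts in hand, fix $n$ satisfying $\gcd(|c|, D_s(n)) = 1$. Applying the first implication to $m = D_{s'}(n)$ shows that $D_{s'}(n)$ discriminates $s$, giving $D_s(n) \le D_{s'}(n)$. Applying the second implication to $m = D_s(n)$, which is coprime to $c$ by hypothesis, shows that $D_s(n)$ discriminates $s'$, giving $D_{s'}(n) \le D_s(n)$. Combining the inequalities yields the desired equality.

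There is no real obstacle here; the proof is a one-line algebraic manipulation packaged carefully. The only subtlety worth flagging is that the coprimality hypothesis is needed in only one of the two directions — namely, the half showing $D_{s'}(n) \le D_s(n)$ — while the reverse inequality $D_s(n) \le D_{s'}(n)$ holds for every nonzero integer $c$ and every $n$.
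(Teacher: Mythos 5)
Your argument is correct and complete: the two one-sided implications (discriminating $s'$ always implies discriminating $s$; discriminating $s$ implies discriminating $s'$ when the modulus is coprime to $c$) combine exactly as you say to give $D_s(n) \le D_{s'}(n) \le D_s(n)$, and your observation that coprimality is needed in only one direction is accurate. Note that the paper itself offers no proof of this lemma --- it is quoted from Haque and Shallit's earlier work --- so there is nothing to compare against here; your proof is the natural self-contained argument (the only point worth adding is the trivial remark that $c \neq 0$ ensures the terms of $s'$ are distinct, so $D_{s'}$ is defined at all).
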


Finally, we also consider the cases in which the quadratic coefficients are not integers. Discriminators are only applicable to integer sequences, so we are interested in quadratic polynomials that are \textit{integer-preserving} i.e. polynomials such that $q(n)$ is an integer if $n$ is an integer. From a result of P\'{o}lya \cite{Polya1915}, we deduce that every integer-preserving quadratic polynomial can be written in the form
\begin{equation*}
	q(n) = c_1 \frac{n(n - 1)}{2!} + c_2 n + c_3 (1) = \frac{c_1}{2} n^2 + \frac{2 c_2 - c_1}{2} n + c_3,
\end{equation*}
for integers $c_1$, $c_2$, and $c_3$. Note that if $c_1$ is even, then $2 c_2 - c_1$ must also be even, and vice versa. Thus, we can express all integer-valued quadratic polynomials in the form
\begin{equation*}
	q (n) = \frac{\alpha'}{2} n^2 + \frac{\beta'}{2} n + \gamma,
\end{equation*}
for integers $\alpha'$, $\beta'$, and $\gamma$, where $\alpha'$ and $\beta'$ are either both even, or both odd. We denote this sequence as $(\qr (n))_{n \geq 0} = \left(\frac{\alpha'}{2} n^2 + \frac{\beta'}{2} n + \gamma\right)_{n \geq 0}$. Here, for any integers $i$ and $j$, we have
\begin{equation}
	\qr (j) - \qr (i) = (j - i)(\alpha(i + j) + \beta) = \frac{(j - i)(\alpha' (i + j) + \beta')}{2}
	\label{qrdiff}
\end{equation}

We can extend Lemma~\ref{generalNOTdisc} to apply to quadratic sequences with rational coefficients, except with odd primes $p \geq 3$ instead of $p \geq 2$.

\begin{lemma}
	Let $p \geq 3$ be a prime number and let $(\qr (n))_{n \geq 0} = \left(\frac{\alpha'}{2} n^2 + \frac{\beta'}{2} n + \gamma\right)_{n \geq 0}$ for integers $\alpha'$, $\beta'$, and $\gamma$ be a quadratic sequence such that $\alpha'$ and $\beta'$ are odd, and any of the following conditions are satisfied:
	\begin{enumerate}
		\item $p \nmid \alpha'$;
		\item $p | \beta'$;
		\item $\alpha' = p^k c$ for some integer $c$ such that $c \nmid \beta'$.
	\end{enumerate}
	Then there exists a value of $n > 0$ such that $D_{\qr} (n) \neq p^{\lceil \log_p n \rceil}$.
	\label{qrNOTdisc}
\end{lemma}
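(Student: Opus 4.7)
The plan is to reduce Lemma~\ref{qrNOTdisc} directly to Lemma~\ref{generalNOTdisc} by clearing the factor of $\tfrac{1}{2}$, exploiting the fact that $p$ is odd. Define the auxiliary integer sequence
\[
	s(n) := 2\qr(n) = \alpha' n^2 + \beta' n + 2\gamma,
\]
which has genuinely integer coefficients $(\alpha',\beta',2\gamma)$. The three hypotheses imposed on $(\alpha',\beta')$ in Lemma~\ref{qrNOTdisc} are precisely the three hypotheses on $(\alpha,\beta)$ in Lemma~\ref{generalNOTdisc} applied to $s$. So Lemma~\ref{generalNOTdisc} immediately yields some integer $n > 0$ for which $D_s(n) \neq p^{\lceil \log_p n \rceil}$.

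Next I would transfer this non-equality from $s$ back to $\qr$ using Lemma~\ref{coprimescaledisc} with scale factor $c = 2$, arguing by contrapositive. Suppose for the sake of contradiction that $D_{\qr}(n) = p^{\lceil \log_p n \rceil}$ for the $n$ produced above. Since $p \geq 3$, the value $p^{\lceil \log_p n \rceil}$ is odd, so $\gcd(2, D_{\qr}(n)) = 1$. Lemma~\ref{coprimescaledisc} then gives $D_s(n) = D_{2\qr}(n) = D_{\qr}(n) = p^{\lceil \log_p n \rceil}$, contradicting the conclusion from Lemma~\ref{generalNOTdisc}. Hence $D_{\qr}(n) \neq p^{\lceil \log_p n \rceil}$, as required.

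The only subtle point---and it is precisely what forces the hypothesis $p \geq 3$---is the coprimality step in the transfer. For $p = 2$, the modulus $p^{\lceil \log_p n \rceil}$ is even, so multiplying the sequence by $2$ could genuinely shift the $2$-part of the discriminator, and a counterexample for $s$ need not descend to $\qr$. For odd $p$ the scaling by $2$ is invisible to $p$-power moduli, which is the whole point of invoking Lemma~\ref{coprimescaledisc} rather than redoing the case analysis on $\qr$ directly. I do not expect any computational obstacle beyond verifying this coprime-scaling observation; the three case arguments themselves are already packaged inside Lemma~\ref{generalNOTdisc}.
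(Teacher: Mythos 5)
Your proof is correct, but it takes a more modular route than the paper. The paper's own proof simply re-runs the case analysis of Lemma~\ref{generalNOTdisc} directly on $(\qr(n))_{n \geq 0}$: in the first two cases the constructed pair $(i,j)$ gives $p^{\ell} \mid (j-i)(\alpha'(i+j)+\beta')$, and since $p$ is odd this divisibility survives the division by $2$ in $\qr(j)-\qr(i) = \tfrac{(j-i)(\alpha'(i+j)+\beta')}{2}$; in the third case, non-divisibility by the auxiliary prime $r$ trivially persists after dividing by $2$. You instead apply Lemma~\ref{generalNOTdisc} as a black box to the integer-coefficient sequence $s = 2\qr$ (whose coefficients $\alpha', \beta', 2\gamma$ satisfy exactly the hypotheses), and then transfer the counterexample back to $\qr$ via Lemma~\ref{coprimescaledisc} with scale factor $2$, arguing by contradiction and using that $p^{\lceil \log_p n\rceil}$ is odd so $\gcd(2, D_{\qr}(n)) = 1$ under the contradiction hypothesis; note you correctly apply the coprimality condition to the unscaled sequence $\qr$, which is the direction Lemma~\ref{coprimescaledisc} requires. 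Both arguments hinge on the same fact (oddness of $p$ makes the factor $2$ invisible), but yours avoids inspecting the internals of Lemma~\ref{generalNOTdisc} at the cost of an extra invocation of Lemma~\ref{coprimescaledisc} and of losing the explicit witnesses $n = p^{\ell}$ or $n = r$ that the paper's direct rerun exhibits.
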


\begin{proof}
	The argument is identical to Lemma~\ref{generalNOTdisc}. For the first two cases, since $p$ is odd, it follows that $p | (j - i)(\alpha' (i + j) + \beta')$ implies $p | \frac{(j - i)(\alpha' (i + j) + \beta')}{2} = \qr(j) - \qr (i)$. For the third case, it is clear that $r \nmid (j - i)(\alpha' (i + j) + \beta')$ implies $r \nmid \frac{(j - i)(\alpha' (i + j) + \beta')}{2} = \qr(j) - \qr (i)$. Thus the same arguments apply.
\end{proof}

\section{The case $p = 2$}
\label{three}

In this section, we provide a complete characterization of all integer-valued quadratic sequences with discriminator $2^{\lceil \log_2 n \rceil}$. These quadratic sequences can be divided into two types, based on whether the quardatic coefficients are integers or not, i.e., whether $\alpha$ and $\beta$ are integers for sequences of the form $(q (n))_{n \geq 0} = (\alpha n^2 + \beta n + \gamma)_{n \geq 0}$.

\subsection{Integer quadratic coefficients}
First we focus on the case in which $\alpha$ and $\beta$ are integers. We begin by considering quadratic sequences of the form $(\qd (n))_{n \geq 0} = (2^t n^2 + bn)_{n \geq 0}$ for an integer $t > 0$ and odd integer $b$, and then extend the result later. For these sequences, we can apply Eq.~\eqref{quaddiff} to get
\begin{equation}
	\qd (j) - \qd (i) = (j - i)(2^t (i + j) + b).
\end{equation}

We compute the discriminator for $(\qd (n))_{n \geq 0}$ using Lemma~\ref{generaldisc} for $p = 2$. The first condition for Lemma~\ref{generaldisc} is established by another lemma:

\begin{lemma}
	Let $k \geq 0$. For all positive integers $m < 2^{k + 1}$, there exists at least one pair of integers, $i$ and $j$, such that $0 \leq i < j \leq 2^k$ and $m | \qd (j) - \qd(i)$.
	\label{qdlowerbound}
\end{lemma}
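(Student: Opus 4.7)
The plan is to exhibit, for each positive integer $m < 2^{k+1}$, an explicit pair $(i,j)$ with $0 \leq i < j \leq 2^k$ such that $m \mid \qd(j) - \qd(i) = (j-i)(2^t(i+j) + b)$. The strategy is to split on the $2$-adic valuation: write $m = 2^s u$ with $u$ odd. Because $t \geq 1$ and $b$ is odd, the factor $2^t(i+j) + b$ is always odd, so the entire $2^s$ part of $m$ must come from $j - i$.

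For $s \geq 1$, I would set $j - i = 2^s$ and vary $i$ over $\{0, 1, \ldots, 2^k - 2^s\}$, reducing the task to finding such an $i$ with $u \mid 2^t(2i + 2^s) + b$. The values on the right form an arithmetic progression with common difference $2^{t+1}$, coprime to the odd modulus $u$, so they cycle through all residues once their count $2^k - 2^s + 1$ reaches $u$. From $m < 2^{k+1}$ I get $u \leq 2^{k+1-s} - 1$, so the required inequality $u \leq 2^k - 2^s + 1$ reduces to $2^{k+1-s} + 2^s \leq 2^k + 2$, which holds for every $1 \leq s \leq k$ by convexity (with equality at the endpoints $s = 1$ and $s = k$).

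The case $s = 0$ with $m \leq 2^k$ is immediate: take $(i, j) = (0, m)$. The main obstacle is the remaining case $s = 0$ and $2^k < m < 2^{k+1}$, in which $j - i \leq 2^k < m$ so that $j - i$ contributes no factor of $m$ and one must force $m \mid 2^t(i+j) + b$ directly. For this I would reparametrize via $x := j - i$ and $y := j + i$, demanding $x \geq 1$, $y \geq x$, $y + x \leq 2^{k+1}$, and $y \equiv x \pmod 2$. Since $\gcd(2^t, m) = 1$, there is a unique $y_0 \in \{0, \ldots, m-1\}$ with $2^t y_0 + b \equiv 0 \pmod m$. I would then set $y := y_0$ if $y_0 \geq 1$, else $y := m$, and pick $x \in \{1, 2\}$ matching the parity of $y$. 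The bounds $1 \leq y \leq m \leq 2^{k+1} - 1$ then let me verify $1 \leq x \leq y$, $y + x \leq 2^{k+1}$, and $x \leq 2^k$, so that $i = (y-x)/2$ and $j = (y+x)/2$ form an admissible pair, while $m \mid 2^t y + b$ by construction.
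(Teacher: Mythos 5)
Your proposal is correct, and it rests on the same structural observation as the paper's proof: since $t \geq 1$ and $b$ is odd, the factor $2^t(i+j)+b$ is always odd, so the $2$-part of $m$ must come from $j-i$ while the odd part is absorbed by $2^t(i+j)+b$ via the invertibility of $2$ modulo odd numbers. The execution differs in the details. For even $m = 2^s u$ the paper separates pure powers of $2$ from the mixed case and exhibits an explicit witness, taking $x = -b(2^{t+1})^{-1} \bmod r$, $i = (x - 2^{\ell-1}) \bmod r$, $j = i + 2^\ell$; you instead fix $j-i = 2^s$ and argue by pigeonhole that among the $2^k - 2^s + 1$ admissible values of $i$ the progression $2^t(2i+2^s)+b$, whose common difference $2^{t+1}$ is coprime to $u$, must hit $0 \bmod u$, the counting being settled by the convexity bound $2^{k+1-s}+2^s \leq 2^k+2$ for $1 \leq s \leq k$. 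This is less explicit but uniform in $s$ and avoids the paper's separate treatment of powers of two. For odd $m$ both arguments solve $2^t y + b \equiv 0 \pmod{m}$ for $y = i+j$; the paper splits on whether $y \leq 2^k$ to place $(i,j)$, whereas your $(x,y)$-parametrization with $x \in \{1,2\}$ chosen by parity does the same job, and your substitution $y = m$ when $y_0 = 0$ explicitly handles the degenerate situation $m \mid b$ (in particular $m=1$), which the paper's choice $i=0$, $j=x$ leaves implicit. Your shortcut of disposing of odd $m \leq 2^k$ via $m \mid j-i$ with $(i,j)=(0,m)$ is another harmless deviation; all bounds and divisibility checks in your argument go through.
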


\begin{proof}
	We consider the different possible cases for the value of $m$.
	\begin{enumerate}
		\item $m$ is a power of 2, i.e., $m = 2^{\ell}$ where $\ell \leq k$. Set $i = 0$ and $j = 2^i$ so that $j - i = 2^\ell$.
		\item $m$ is odd. Since $m$ and $2^t$ are co-prime, this implies that $2^t$ has a multiplicative inverse modulo $m$. Let $x = -b(2^t)^{-1} \bmod m$. If $x \leq 2^k$, then we choose $i = 0$ and $j = x$. Otherwise, if $x > 2^k$, we choose $j = 2^k$ and $i = x - 2^k$. Since $x < m < 2^{k + 1}$, it follows that $i < 2^{k + 1} - 2^k = 2^k = j$. In both cases, we have $i + j = x$, and therefore,
		\begin{align*}
			\qd(j) - \qd(i) = (j - i)(2^t x + b) &\equiv \modd{(j - i)(2^t (-b)(2^t)^{-1} + b)} {m}\\
			&\equiv (j - i)(-b + b) \equiv \modd{0} {m}.
		\end{align*}
		\item $m$ is even, but not a power of 2. In this case, we can write $m = 2^\ell \cdot r < 2^{k + 1}$ for $0 < \ell < k$, and odd $r > 2$. This implies $r < 2^{k + 1 - \ell}$. This time, let $x = -b(2^{t + 1})^{-1} \bmod r$, and choose $i = (x - 2^{\ell - 1}) \bmod r$ and $j = i + 2^\ell$, to get
		\begin{align*}
			\qd (j) - \qd (i) &= (i + 2^\ell - i)(2^t (i + i + 2^\ell) + b) = 2^\ell (2^t (2i + 2^\ell) + b)\\
			&= 2^\ell (2^{t + 1} (i + 2^{\ell - 1}) + b),
		\end{align*}
		which is divisible by $2^\ell$. Also,
		\begin{align*}
			\qd (j) - \qd (i) &= 2^\ell (2^{t + 1} (i + 2^{\ell - 1}) + b) \equiv
			\modd{2^\ell (2^{t + 1} x + b)} {r}\\
			&\equiv 2^\ell (2^{t + 1}(-b)(2^{t + 1})^{-1} + b) \equiv 2^\ell (-b + b) \equiv \modd{0} {r}.
		\end{align*}
		
		We now verify the conditions on $i$ and $j$. It is clear that $0 \leq i < j$. Furthermore, we have $i < r < 2^{k + 1 - \ell}$ and $0 < \ell < k$. For $\ell = 1$, we have $j = i + 2 \leq (r - 1) + 2 = r + 1 \leq (2^{k + 1 - 1} - 1) + 1 = 2^k$. For $\ell > 1$, we have $r < 2^{k + 1 - 2} = 2^{k - 1}$, and so, $j = i + 2^\ell < r + 2^\ell < 2^{k - 1} + 2^\ell \leq 2^{k - 1} + 2^{k - 1} = 2^k$. Therefore, the condition $0 \leq i < j \leq 2^k$ is fulfilled and $\qd(j) - \qd(i)$ is divisible by both $2^\ell$ and $r$, and thus $m | \qd(j) - \qd(i)$.
	\end{enumerate}
	In all cases, we have $m | \qd (j) - \qd (i)$ for some $i$ and $j$ in the required range.
\end{proof}

The second condition of Lemma~\ref{generaldisc} is also satisfied, as shown by the next lemma:
\begin{lemma}
	Let $k \geq 0$. For all integers $i$ and $j$ satisfying $0 \leq i < j < 2^{k + 1}$, we have $2^{k + 1} \nmid \qd (j) - \qd (i)$.
	\label{qdupperbound}
\end{lemma}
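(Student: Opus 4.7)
The plan is to use Eq.~\eqref{quaddiff} to factor $\qd(j) - \qd(i)$ and then isolate its $2$-adic valuation. By that identity,
\[
\qd(j) - \qd(i) = (j - i)\bigl(2^t (i + j) + b\bigr).
\]
The key observation is that the second factor is odd: since $t > 0$, the term $2^t(i+j)$ is even, while $b$ is odd by hypothesis, so $2^t(i+j) + b$ is odd. Therefore the $2$-adic valuation of $\qd(j) - \qd(i)$ equals exactly $v_2(j - i)$.

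From there, the conclusion is immediate: the hypothesis $0 \le i < j < 2^{k+1}$ gives $1 \le j - i < 2^{k+1}$, so $j - i$ is a positive integer strictly less than $2^{k+1}$ and hence cannot be a multiple of $2^{k+1}$. This forces $v_2(j-i) \le k$, and combining with the previous paragraph we get $v_2(\qd(j) - \qd(i)) \le k$, i.e.\ $2^{k+1} \nmid \qd(j) - \qd(i)$.

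There is essentially no obstacle here; the whole lemma hinges on the single parity observation that $2^t(i+j) + b$ is odd whenever $t \ge 1$ and $b$ is odd, which is exactly why the hypothesis on $b$ was built into the definition of $\qd$. The argument is short and purely arithmetic, and no case analysis (like the one needed in Lemma~\ref{qdlowerbound}) is required.
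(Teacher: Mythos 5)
Your proof is correct and follows essentially the same route as the paper: both observe that the factor $2^t(i+j)+b$ is odd (even plus odd), so all powers of $2$ dividing $\qd(j)-\qd(i)$ come from $j-i$, which is less than $2^{k+1}$. Phrasing this via the $2$-adic valuation is just a cosmetic difference.
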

\begin{proof}
	We know $\qd (j) - \qd (i) = (j - i)(2^t (i + j) + b)$, where $t > 0$. Here, the second factor is the sum of an even number and an odd number, and therefore must itself be odd and not divisible by 2. Therefore, any powers of 2 that divide $\qd (j) - \qd (i)$ must divide the first factor, $(j - i)$. But $j - i \leq j < 2^{k + 1}$. Therefore, $2^{k + 1} \nmid \qd (j) - \qd (i)$ for all $i$ and $j$ in the range $0 \leq i < j < 2^{k + 1}$.
\end{proof}

Therefore, the two conditions in Lemma~\ref{generaldisc} are satisfied for $(\qd (n))_{n \geq 0} = (2^t n^2 + bn)_{n \geq 0}$ for every integer $t > 0$ and odd integer $b$. It follows from Lemma~\ref{generaldisc} that $D_{\qd} (n) = 2^{\lceil \log_2 n \rceil}$ for $n > 0$. Along with Lemma~\ref{generalNOTdisc} and Lemma~\ref{coprimescaledisc}, this is sufficient to characterize all quadratic sequences with integer coefficients that have discriminator $2^{\lceil \log_2 n \rceil}$, as shown in the following theorem:

\begin{theorem}
	\label{qdisctwo}
	For all quadratic sequences with integer coefficients, i.e., $(q(n))_{n \geq 0} = (\alpha n^2 + \beta n + \gamma)_{n \geq 0}$ for integers $\alpha$, $\beta$, and $\gamma$, the discriminator $D_q (n)$ is equal to $2^{\lceil \log_2 n \rceil}$ for all $n \geq 0$ if and only if all of the following conditions are satisfied:
	\begin{enumerate}
		\item $\alpha$ is even, i.e., $\alpha = 2^t \cdot r$ for some $t \geq 1$ and odd $r$;
		\item $\beta$ is odd;
		\item $r | \beta$.
	\end{enumerate}
\end{theorem}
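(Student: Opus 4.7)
\medskip

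The plan is to prove both directions of the equivalence by stringing together the lemmas already established (Lemma~\ref{generaldisc}, Lemma~\ref{generalNOTdisc}, Lemma~\ref{coprimescaledisc}, and Lemmas~\ref{qdlowerbound}–\ref{qdupperbound}), rather than doing any new modular arithmetic.

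For the \emph{necessity} direction, I would argue by contrapositive using Lemma~\ref{generalNOTdisc} with $p=2$. Suppose $D_q(n)=2^{\lceil\log_2 n\rceil}$ for all $n\geq 1$. If $\alpha$ were odd then Case~1 of Lemma~\ref{generalNOTdisc} would produce an $n$ with $D_q(n)\neq 2^{\lceil\log_2 n\rceil}$, contradicting the hypothesis; so $\alpha$ must be even, giving condition~1. Similarly, if $\beta$ were even then Case~2 would yield the same contradiction (since $\alpha$ is already known even), so $\beta$ must be odd, giving condition~2. Finally, writing $\alpha=2^t r$ with $r$ odd, if $r\nmid\beta$ then Case~3 (with $c=r$) would again contradict the hypothesis, so $r\mid\beta$, giving condition~3.

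For the \emph{sufficiency} direction, assume conditions~1–3 hold. The discriminator is invariant under shifting the sequence by a constant, so I may take $\gamma=0$. Writing $\beta=rb$, both $r$ and $\beta$ are odd, hence $b$ is odd. Then
\[
	q(n) = 2^t r n^2 + rb n = r\bigl(2^t n^2 + bn\bigr) = r\cdot\qd(n),
\]
where $\qd$ is exactly the sequence studied in the previous subsection (with parameters $t\geq 1$ and odd $b$). Lemmas~\ref{qdlowerbound} and~\ref{qdupperbound} verify the two hypotheses of Lemma~\ref{generaldisc} for $p=2$ and this $\qd$, so $D_{\qd}(n)=2^{\lceil\log_2 n\rceil}$ for all $n\geq 1$. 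Since $r$ is odd, $\gcd(|r|,2^{\lceil\log_2 n\rceil})=1$ for every $n$, and Lemma~\ref{coprimescaledisc} applied with $c=r$ yields $D_q(n)=D_{\qd}(n)=2^{\lceil\log_2 n\rceil}$.

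There is no real obstacle here: the heavy lifting was already done in proving the three cited lemmas and the discriminator formula for $\qd$. The only point that needs a sentence of care is the parity observation that $b=\beta/r$ is odd (because a quotient of two odd integers is odd), which is what lets the scaling argument through Lemma~\ref{coprimescaledisc} go through cleanly.
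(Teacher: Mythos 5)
Your proposal is correct and follows essentially the same route as the paper: necessity via the three cases of Lemma~\ref{generalNOTdisc} with $p=2$, and sufficiency by setting $\gamma=0$, factoring $q(n)=r\cdot\qd(n)$ with $b=\beta/r$ odd, applying Lemma~\ref{generaldisc} through Lemmas~\ref{qdlowerbound} and~\ref{qdupperbound}, and then scaling by the odd factor $r$ via Lemma~\ref{coprimescaledisc}. Your explicit remark that $b=\beta/r$ is odd is a welcome small clarification that the paper leaves implicit.
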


\begin{proof}
	We assume $\gamma = 0$ since the discriminator does not depend on it. Now suppose conditions (1)-(3) hold. Then the resulting sequence, $(q(n))_{n \geq 0} = (2^t rn^2 + \beta n)_{n \geq 0}$ is equivalent to the sequence $(r \cdot \qd(n))_{n \geq 0} = (r(2^t n^2 + bn))_{n \geq 0}$ with $b = \frac{\beta}{r}$. For $r = 1$, we know $D_{\qd} (n) = 2^{\lceil \log_2 n \rceil}$ by an application of Lemma~\ref{generaldisc} for $p = 2$, with Lemmas~\ref{qdlowerbound} and~\ref{qdupperbound} verifying that the conditions are fulfilled. 
	
	Since $r$ is odd, it is co-prime to $D_{\qd}$ for all $n \geq 1$, and so we can apply Lemma~\ref{coprimescaledisc} to show that $D_q (n) = D_{\qd} (n) = 2^{\lceil \log_2 n \rceil}$. 
	
	For the other direction, we observe that the violation of any one of these conditions implies the violation of a corresponding condition of Lemma~\ref{generalNOTdisc} for $p = 2$, which showed that there exists a value of $n \geq 1$ for which $D_q (n) \neq 2^{\lceil \log_2 n \rceil}$. 
\end{proof}

Thus, we have provided a complete characterization of quadratic sequences with integer coefficients that have discriminator $2^{\lceil \log_2 n \rceil}$. We can further show that the discriminator for these sequences are shift-invariant. As defined in the previous chapter, the discriminator of $(q(n))_{n \geq 0}$ is said to be \textit{shift-invariant} if it shares the same discriminator as $(q(n + c))_{n \geq 0}$ for all $c \geq 0$.

\begin{theorem}
	For quadratic sequences $(q(n))_{n \geq 0} = (\alpha n^2 + \beta n + \gamma)_{n \geq 0}$ for integers $\alpha$, $\beta$, and $\gamma$ with discriminator $D_q (n) = 2^{\lceil \log_2 n \rceil}$, the discriminator of the shifted sequence, $(\qs(n, c))_{n \geq 0} = q(n + c)$ for any integer $c$ also satisfies $D_{\qs} (n) = 2^{\lceil \log_2 n \rceil}$.
\end{theorem}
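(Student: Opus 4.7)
The plan is to reduce this directly to Theorem~\ref{qdisctwo}. By hypothesis, the sequence $(q(n))_{n \geq 0}$ has $D_q(n) = 2^{\lceil \log_2 n \rceil}$, so by Theorem~\ref{qdisctwo} we may write $\alpha = 2^t r$ with $t \geq 1$ and $r$ odd, $\beta$ is odd, and $r \mid \beta$. Since the constant term $\gamma$ is irrelevant to the discriminator, it suffices to show that $\qs(n,c)$, as a polynomial in $n$, is itself a quadratic satisfying the three conditions of Theorem~\ref{qdisctwo}.

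First I would expand
\begin{equation*}
q(n + c) = \alpha n^2 + (2\alpha c + \beta) n + (\alpha c^2 + \beta c + \gamma),
\end{equation*}
so the shifted sequence is a quadratic in $n$ with leading coefficient $\alpha' = \alpha$ and linear coefficient $\beta' = 2\alpha c + \beta$. The leading coefficient is unchanged, so condition (1) of Theorem~\ref{qdisctwo} still holds with the same $t$ and $r$. For condition (2), note that $2\alpha c$ is even, so $\beta' = 2\alpha c + \beta$ has the same parity as $\beta$, hence is odd. For condition (3), since $r \mid \alpha$ we have $r \mid 2\alpha c$, and combined with the hypothesis $r \mid \beta$ this gives $r \mid \beta'$.

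All three conditions of Theorem~\ref{qdisctwo} therefore apply to $(\qs(n,c))_{n \geq 0}$, so $D_{\qs}(n) = 2^{\lceil \log_2 n \rceil}$. There is no substantive obstacle here: the entire argument is a one-line verification that the characterization in Theorem~\ref{qdisctwo} is stable under the substitution $n \mapsto n + c$, with the key observation being that the shift only modifies $\beta$ by an even multiple of $\alpha$, which preserves both the parity of $\beta$ and divisibility by the odd part $r$ of $\alpha$.
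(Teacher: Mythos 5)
Your proposal is correct and is essentially the same argument as the paper's: expand $q(n+c)$, observe that the leading coefficient $\alpha = 2^t r$ is unchanged, that the new linear coefficient $2\alpha c + \beta$ is odd and divisible by $r$, and invoke Theorem~\ref{qdisctwo}. (Your expansion even writes the linear coefficient correctly as $2\alpha c + \beta$, where the paper's displayed formula contains a stray factor of $n$.)
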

\begin{proof}
	From Theorem~\ref{qdisctwo}, we know that $\alpha = 2^t \cdot r$ for some $t \geq 1$ and odd $r$, $\beta$ is odd, and that $r | \beta$. Now, for any integer $c$, we have
	\begin{align*}
		\qs (n, c) &= q(n + c) = \alpha (n + c)^2 + \beta (n + c) + \gamma\\
		&= \alpha n^2 + 2 \alpha nc + \alpha c^2 + \beta n + \beta c + \gamma\\
		&= \alpha n^2 + (2 \alpha nc + \beta) n + (\alpha c^2 + \beta c + \gamma).
	\end{align*}
	The coefficient of $n^2$ is $\alpha = 2^t \cdot r$, which is even, while the coefficient of $n$ is $2 \alpha nc + \beta$, which is odd. Furthermore, since $r | \alpha$ and $r | \beta$, we have $r | 2 \alpha nc + \beta$. Therefore, the three conditions in Theorem~\ref{qdisctwo} are fulfilled by $(\qs(n, c))_{n \geq 0}$ and so $D_{\qs} (n) = 2^{\lceil \log_2 n \rceil}$.
\end{proof}

\subsection{Half-integer quadratic coefficients}

We now consider quadratic sequences of the form $(\qr (n))_{n \geq 0} = \left(\frac{\alpha'}{2} n^2 + \frac{\beta'}{2} n + \gamma\right)_{n \geq 0}$ for odd $\alpha'$ and $\beta'$. Recall from Eq.~\eqref{qrdiff} that
\begin{equation*}
	\qr (j) - \qr (i) = (j - i)(\alpha(i + j) + \beta) = \frac{(j - i)(\alpha' (i + j) + \beta')}{2}. \tag{\ref{qrdiff} revisited}
\end{equation*}

To characterize the discriminator of $(\qr (j))_{n \geq 0}$, we first consider the sequence of triangular numbers, $(\tr(n))_{n \geq 0} = \left(\frac{1}{2} n^2 + \frac{1}{2} n\right)_{n \geq 0}$ and extend the result to all $(\qr (j))_{n \geq 0}$. The discriminator for the sequence of triangular numbers was already shown by Sun \cite{Sun13} to be $2^{\lceil \log_2 n \rceil}$, but here we present an alternate proof that utilizes Lemma~\ref{generaldisc}.

For the sequence of triangular numbers, Eq.~\eqref{qrdiff} becomes
\begin{equation*}
	\tr(j) - \tr(i) = \frac{(j - i)(i + j + 1)}{2}.
\end{equation*}
The first condition for Lemma~\ref{generaldisc} is established by the following lemma:

\begin{lemma}
	Let $k \geq 0$. For all positive integers $m < 2^{k + 1}$, there exists at least one pair of integers, $i$ and $j$, such that $0 \leq i < j \leq 2^k$ and $m | \tr (j) - \tr (i)$.
	\label{trlowerbound}
\end{lemma}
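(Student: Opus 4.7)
The plan is to mirror the structure of the proof of Lemma~\ref{qdlowerbound}, substituting the identity $\tr(j) - \tr(i) = \frac{(j-i)(i+j+1)}{2}$ everywhere. The key new wrinkle, compared to the $\qd$ setting, is that $j - i$ and $i + j + 1$ always have opposite parity, so exactly one of the two factors can carry any power of $2$ and the $\frac{1}{2}$ out front has to be absorbed cleanly in each case. I will split on the arithmetic type of $m$: a power of $2$, odd, or even but not a pure power of $2$.

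For $m = 2^\ell$ with $\ell \leq k$, I plan to treat $\ell < k$ and $\ell = k$ separately. When $\ell < k$, the pair $i = 0$, $j = 2^{\ell+1}$ lies in range and gives $\tr(j) - \tr(i) = 2^\ell(2^{\ell+1}+1)$. When $\ell = k$ this choice overshoots $2^k$, so I will instead take $(i, j) = (2^k - 1, 2^k)$, which makes $i + j + 1 = 2^{k+1}$ and $\tr(j) - \tr(i) = 2^k$. For odd $m$, I aim for $i + j + 1 = m$, using $(i, j) = (0, m - 1)$ when $m - 1 \leq 2^k$ and $(i, j) = (m - 1 - 2^k, 2^k)$ otherwise; since $m$ is odd, $j - i$ is forced even, so $\tr(j) - \tr(i) = \frac{j-i}{2} \cdot m$ is automatically an integer multiple of $m$.

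For the mixed case $m = 2^\ell r$ with $\ell \geq 1$ and odd $r \geq 3$ (where $r \geq 3$ together with $m < 2^{k+1}$ forces $\ell \leq k - 1$), I will pin $j - i = r$ and solve for $i$ so that $2^{\ell+1}$ divides $i + j + 1 = 2i + r + 1$. Since $r + 1$ is even, this reduces to $i \equiv -(r+1)/2 \pmod{2^\ell}$, which has a unique representative $i_0 \in \{0, 1, \ldots, 2^\ell - 1\}$. Setting $i = i_0$ and $j = i_0 + r$ then makes $\tr(j) - \tr(i) = r \cdot \frac{2i_0 + r + 1}{2}$ divisible by both $r$ and $2^\ell$, hence by $m$.

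The hard part will be verifying $j \leq 2^k$ in this last case. Since $r < 2^{k+1-\ell}$ and $i_0 < 2^\ell$, the bound reduces to showing $2^\ell + 2^{k+1-\ell} \leq 2^k + 2$ for $1 \leq \ell \leq k - 1$. This inequality is actually tight at $\ell = 1$, where both sides equal $2^k + 2$, and is comfortably true in the interior range $2 \leq \ell \leq k - 2$ because both summands are then at most $2^{k-1}$; the remaining boundary $\ell = k - 1$ is handled by noting that the constraint already forces $k \geq 2$. Once this range check is in place, the three cases jointly establish the first condition of Lemma~\ref{generaldisc} for the triangular numbers.
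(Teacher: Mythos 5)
Your argument is correct, and it sits within the same overall framework as the paper's proof --- the identical three-way split on $m$ (power of $2$, odd, even but not a power of $2$) applied to the identity $\tr(j)-\tr(i)=\tfrac{(j-i)(i+j+1)}{2}$ --- but the constructions inside the cases are different. The paper always hits $m$ \emph{exactly}: for odd $m$ it takes $i=\lfloor m/2\rfloor-1$, $j=\lceil m/2\rceil$ (so $j-i=2$ and $i+j+1=m$), and in the mixed case $m=2^\ell(2r+1)$ it splits on $r\ge 2^\ell$ versus $r<2^\ell$ and chooses symmetric pairs with $\{j-i,\,i+j+1\}=\{2^{\ell+1},\,2r+1\}$, which makes the range check $j=r+2^\ell<2^{k-1}+2^{k-1}=2^k$ immediate. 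You instead follow the congruence-solving style of Lemma~\ref{qdlowerbound}: in the mixed case you pin $j-i=r$ (the odd part) and solve $i\equiv -(r+1)/2 \pmod{2^\ell}$ so that $2^{\ell+1}\mid i+j+1$, obtaining a multiple of $m$ rather than $m$ itself; this is valid, but it transfers the work to the bound $2^\ell+2^{k+1-\ell}\le 2^k+2$ for $1\le\ell\le k-1$, which is tight at $\ell=1$ --- your endpoint/interior verification of it is fine. Your power-of-$2$ case also needs the extra split $\ell<k$ versus $\ell=k$, whereas the paper's single choice $(i,j)=(2^\ell-1,2^\ell)$ covers all $\ell\le k$; and note that your odd-case formula degenerates at $m=1$ (it gives $i=j=0$), which is harmless only because $m=1$ is absorbed into your power-of-$2$ case. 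In short: the paper's explicit symmetric pairs give a shorter verification, while your version is more mechanical and mirrors the integer-coefficient lemma, at the cost of a tighter inequality and a couple of extra subcases.
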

\begin{proof}
	We consider the different possible cases for the value of $m$.
	\begin{enumerate}
		\item $m$ is a power of 2, i.e., $m = 2^\ell$ where $\ell \leq k$. Set $i = 2^\ell - 1$ and $j = 2^\ell$ to get
		\begin{align*}
			\tr (j) - \tr (i) = \frac{(2^\ell - 2^\ell + 1)(2^\ell - 1 + 2^\ell + 1)}{2} = \frac{2^{\ell + 1}}{2} = 2^\ell = m.
		\end{align*}
		\item $m$ is odd. For $m = 1$, we set $i = 0$ and $j = 1$. Otherwise, we set $i = \lfloor \frac{m}{2} \rfloor - 1$ and $j = \lceil \frac{m}{2} \rceil$ to get
		\begin{align*}
			\tr (j) - \tr (i) = \frac{(\lceil \frac{m}{2} \rceil - \lfloor \frac{m}{2} \rfloor + 1)(\lfloor \frac{m}{2} \rfloor - 1 + \lceil \frac{m}{2} \rceil + 1)}{2} = \frac{2m}{2} = m.
		\end{align*}
		\item $m$ is even, but not a power of 2. In this case, we can write $m = 2^\ell (2r + 1) < 2^{k + 1}$ for $0 < \ell < k$ and $r > 0$. This implies $r < 2^{k - \ell} \leq 2^{k - 1}$. We have two further cases here. If $r \geq 2^\ell$, set $i = r - 2^\ell$ and $j = r + 2^\ell$ to get
		\begin{align*}
			\tr (j) - \tr (i) &= \frac{(r + 2^\ell - r + 2^\ell)(r - 2^\ell + r + 2^\ell + 1)}{2}\\
			&= \frac{(2^{\ell + 1})(2r + 1)}{2} = 2^\ell (2r + 1)= m.
		\end{align*} 
		Otherwise, if $r < 2^\ell$, set $i = 2^\ell - r - 1$ and $j = r + 2^\ell$ to get
		\begin{align*}
			\tr (j) - \tr (i) &= \frac{(r + 2^\ell - 2^\ell + r + 1)(2^\ell - r - 1 + r + 2^\ell + 1)}{2}\\
			&= \frac{(2r + 1)(2^{\ell + 1})}{2} = 2^\ell (2r + 1)= m.
		\end{align*} 
		In both cases, it is clear that $0 \leq i < j$. Furthermore, since $r < 2^{k - \ell}$ and $0 < \ell < k$, we have $j = r + 2^\ell < 2^{k - 1} + 2^\ell \leq 2^{k - 1} + 2^{k - 1} = 2^k$, thus fulfilling the required constraints on $i$ and $j$.
	\end{enumerate}
	In all cases, we have $\tr (j) - \tr (i) = m$ for some $i$ and $j$ in the required range.
\end{proof}

The second condition of Lemma~\ref{generaldisc} is established by the following lemma.
\begin{lemma}
	Let $k \geq 0$. For all pairs of integers $i$ and $j$ satisfying $0 \leq i < j < 2^{k + 1}$, we have $2^{k + 1} \nmid \tr (j) - \tr (i)$.
	\label{trupperbound}
\end{lemma}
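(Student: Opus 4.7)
The plan is to rewrite $\tr(j) - \tr(i) = \frac{(j-i)(i+j+1)}{2}$ and track its $2$-adic valuation. The key observation, analogous to the parity argument used in Lemma~\ref{qdupperbound}, is that $(j-i)$ and $(i+j+1)$ have opposite parities: their sum equals $2j + 1$, which is odd. Hence exactly one of these two factors is even, and the $2$-adic valuation of the product $(j-i)(i+j+1)$ equals the $2$-adic valuation of whichever factor happens to be even.

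From here I would split into two cases. If $j - i$ is even (so $i + j + 1$ is odd), then $v_2((j-i)(i+j+1)) = v_2(j-i)$; since $1 \leq j - i < 2^{k+1}$, we have $v_2(j - i) \leq k$, so after dividing by $2$ we obtain $v_2(\tr(j) - \tr(i)) \leq k - 1 < k + 1$. If instead $i + j + 1$ is even (so $j - i$ is odd), then $v_2((j-i)(i+j+1)) = v_2(i+j+1)$; from $0 \leq i < j < 2^{k+1}$ we get $i + j + 1 \leq 2^{k+2} - 2 < 2^{k+2}$, hence $v_2(i + j + 1) \leq k + 1$, and after dividing by $2$ we again conclude $v_2(\tr(j) - \tr(i)) \leq k < k + 1$.

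In both cases $2^{k+1} \nmid \tr(j) - \tr(i)$, which is exactly what is required. There is no real obstacle; the whole argument is a clean parity-plus-valuation computation, essentially the same strategy as Lemma~\ref{qdupperbound} but with the extra factor of $2$ in the denominator absorbed by the tighter bound on $i + j + 1$. The only point warranting a moment of care is verifying that even in the tighter case where $i + j + 1$ is even and could in principle reach the value $2^{k+1}$, the division by $2$ leaves a valuation of at most $k$, which is still strictly less than $k + 1$.
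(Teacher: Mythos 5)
Your argument is correct and is essentially the paper's own proof in different clothing: both rest on the observation that $(j-i)$ and $(i+j+1)$ have opposite parity, together with the size bounds $j-i < 2^{k+1}$ and $i+j+1 < 2^{k+2}$, so that $2^{k+2}$ cannot divide the product $(j-i)(i+j+1)$. Your $2$-adic valuation phrasing with two cases is just a direct-form restatement of the paper's contradiction argument, and every step checks out.
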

\begin{proof}
	If, for some $i$ and $j$, we have $2^{k + 1} | \tr(j) - \tr(i) = \frac{(j - i)(i + j + 1)}{2}$, it follows that $2(2^{k + 1}) = 2^{k + 2} | (j - i)(i + j + 1)$. Note that between the factors $(j - i)$ and $(i + j + 1)$, one of them must be odd while the other is even. Therefore, at most only one of those factors can be a multiple of $2^{k + 2}$ for any $i$ and $j$. However, if $0 \leq i < j < 2^{k + 1}$, then $j - i < i + j + 1 \leq 2j < 2(2^{k + 1}) = 2^{k + 2}$, and therefore, $2^{k + 2}$ cannot divide either of those factors. In other words, $2^{k + 1} \nmid \tr (j) - \tr (i)$ for all $i$ and $j$ such that $0 \leq i < j < 2^{k + 1}$.
\end{proof}

Thus, the two conditions of Lemma~\ref{generaldisc} are established for the sequence of triangular numbers through Lemma~\ref{trlowerbound} and Lemma~\ref{trupperbound} respectively. Therefore, we can apply Lemma~\ref{generaldisc} to show that $D_{\tr}(n) = 2^{\lceil \log_2 n \rceil}$ for $n > 0$. 

We now characterize all sequences of the form $(\qr (n))_{n \geq 0} = \left(\frac{\alpha'}{2} n^2 + \frac{\beta'}{2} n + \gamma\right)_{n \geq 0}$ for odd integers $\alpha'$ and $\beta'$, and any integer $\gamma$, that have the discriminator $D_{\tr}(n) = 2^{\lceil \log_2 n \rceil}$.

\begin{theorem}
	For all quadratic sequences of the form $(\qr (n))_{n \geq 0} = \left(\frac{\alpha'}{2} n^2 + \frac{\beta'}{2} n + \gamma\right)_{n \geq 0}$ for odd integers $\alpha'$ and $\beta'$, and any integer $\gamma$, the discriminator $D_{\qr} (n)$ is equal to $2^{\lceil \log_2 n \rceil}$ for all $n \geq 0$ if and only if $\alpha' = \beta'$.
	\label{qrdisctwo}
\end{theorem}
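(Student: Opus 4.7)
The plan is to prove both directions separately, leaning on the previously established fact that $D_{\tr}(n) = 2^{\lceil \log_2 n \rceil}$ (obtained via Lemma~\ref{generaldisc} together with Lemmas~\ref{trlowerbound} and~\ref{trupperbound}).

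For the ``if'' direction, when $\alpha' = \beta'$ we have $\qr(n) = \frac{\alpha'}{2}(n^2 + n) + \gamma = \alpha' \cdot \tr(n) + \gamma$, so the sequence is the triangular numbers scaled by the odd integer $\alpha'$ and then shifted by $\gamma$. The additive shift leaves the discriminator unchanged, and since $\alpha'$ is odd it is coprime to $D_{\tr}(n) = 2^{\lceil \log_2 n \rceil}$ for every $n \geq 1$; Lemma~\ref{coprimescaledisc} then immediately gives $D_{\qr}(n) = D_{\tr}(n) = 2^{\lceil \log_2 n \rceil}$.

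For the ``only if'' direction I would argue the contrapositive: assuming $\alpha' \neq \beta'$, I would exhibit a single $n$ for which $D_{\qr}(n) \neq 2^{\lceil \log_2 n \rceil}$. Since $\alpha'$ and $\beta'$ are both odd, $\beta' - \alpha' = 2d$ for some nonzero integer $d$; write $d = 2^s d'$ with $s \geq 0$ and $d'$ odd. The plan is to take $n = 2^{s+1}$ and consider the pair $(i, j) = (0,\, 2^{s+1}-1)$. A short calculation using Eq.~\eqref{qrdiff} gives $\qr(j) - \qr(i) = (2^{s+1}-1)(\alpha' \cdot 2^s + d) = (2^{s+1}-1) \cdot 2^s \cdot (\alpha' + d')$, and since $\alpha'$ and $d'$ are both odd, $\alpha' + d'$ contributes an additional factor of $2$, so $2^{s+1} \mid \qr(j) - \qr(i)$. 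Because $0 \leq i < j < 2^{s+1} = n$, the modulus $2^{\lceil \log_2 n \rceil} = 2^{s+1}$ fails to discriminate $\{\qr(0), \ldots, \qr(n-1)\}$, delivering the required failure.

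The main obstacle is this converse construction: Lemma~\ref{qrNOTdisc} applies only to odd primes, so no off-the-shelf lemma supplies a ``bad'' pair in the $p = 2$ half-integer setting. The guiding intuition is that $\qr(n) = \alpha' \tr(n) + dn + \gamma$ differs from a scaled triangular sequence only by the linear term $dn$, and the choice $j = 2^{s+1}-1$ is calibrated so that $\alpha' j + \beta' = \alpha' \cdot 2^{s+1} + 2d$ acquires exactly the right $2$-adic weight to compensate for the division by $2$ in Eq.~\eqref{qrdiff} and reach the threshold $2^{s+1}$.
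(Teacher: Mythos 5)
Your proof is correct, and while the ``if'' direction coincides with the paper's (scale the triangular numbers by the odd constant $\alpha'$ and invoke Lemma~\ref{coprimescaledisc}, with the shift by $\gamma$ harmless), your ``only if'' direction is a genuinely different and arguably cleaner argument. The paper splits into two cases: for $|\alpha'| \neq |\beta'|$ it picks $k$ with $2^k > \max(|\alpha'|,|\beta'|)$, sets $x = -\beta'(\alpha')^{-1} \bmod 2^{k+1}$, and must then verify somewhat delicately that a suitable pair $i < j < 2^k$ with $i + j = x$ exists (using parity of $x$ and the bound on $|\alpha'|,|\beta'|$); the degenerate case $\alpha' = -\beta'$ is handled separately by noting $\qr(0) = \qr(1)$. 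Your construction instead reads off the exact $2$-adic valuation: writing $\beta' - \alpha' = 2d$, $d = 2^s d'$ with $d'$ odd, the pair $(i,j) = (0, 2^{s+1}-1)$ gives $\qr(j) - \qr(i) = (2^{s+1}-1)\,2^s(\alpha' + d')$, which is divisible by $2^{s+1}$ since $\alpha' + d'$ is even, so $2^{\lceil \log_2 n \rceil}$ fails to discriminate the first $n = 2^{s+1}$ terms. This avoids modular inverses and the choice of a large $k$, produces a failure at the smallest natural scale $n = 2^{\nu_2(\beta'-\alpha')}$, and quietly subsumes the paper's degenerate case ($\alpha' = -\beta'$ gives $s = 0$ and the pair $(0,1)$, where $2 \nmid$-discrimination fails because $\qr(0) = \qr(1)$); the only thing the paper's treatment adds there is the explicit remark that such a sequence cannot be discriminated at all, which your conclusion $D_{\qr}(n) \neq 2^{\lceil \log_2 n\rceil}$ does not need.
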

\begin{proof}
	We know $D_{\tr} (n) = 2^{\lceil \log_2 n \rceil}$ from applying Lemma~\ref{generaldisc}. Furthermore, since $\alpha$ is odd, we have $\gcd(|\alpha|, D_{\tr} (n)) = 1$ for all $n \geq 0$. Therefore, we can apply Lemma~\ref{coprimescaledisc} to show that $D_{\qr} (n) = D_{\tr} (n) = 2^{\lceil \log_2 n \rceil}$.
	
	On the other hand, if $\alpha' \neq \beta'$, we have two cases to consider:
	\begin{description}
		\item[Case 1: $|\alpha'| \neq |\beta'|$.] Let $k$ be any integer such that $2^k > |\alpha'|$ and $2^k > |\beta'|$. We show that there exists a pair of $i$ and $j$ such that $0 \leq i < j < 2^k$ and $2^k | \qr(j) - \qr(i)$. Let $x = -\beta'(\alpha')^{-1} \bmod 2^{k + 1}$. If $x < 2^k$, we choose $i = 0$ and $j = x$. Otherwise, we choose $j = 2^k - 1$ and $i = x - j$. 
		In both cases, we have $i + j = x$, and so,
		\begin{equation*}
			\alpha'(i + j) + \beta' \equiv \alpha'(-\beta')(\alpha')^{-1} + \beta') \equiv -\beta' + \beta' \equiv \modd{0} {2^{k + 1}},
		\end{equation*}
		which implies that $2^k | \frac{\alpha'(i + j) + \beta'}{2}$ and therefore, $2^k | \qr (j) - \qr (i) = \frac{(j - i)(\alpha'(i + j) + \beta')}{2}$.
		
		It is clear that $j < 2^k$ and $i \geq 0$ for both cases, and that $i < j$ if $x < 2^k$. We now verify that $i < j$ for $x \geq 2^k$. Since $|\alpha'| \neq |\beta'|$, and both $|\alpha'|$ and $|\beta'|$ are less than $2^k$, it follows that $\alpha' \not\equiv \pm \modd{\beta'} {2^{k + 1}}$. Therefore, $x = -\beta'(\alpha')^{-1} \bmod 2^{k + 1} \not\equiv \modd{\pm 1} {2^{k + 1}}$, and so, $x < 2^{k + 1} - 1$. Also, since both $\alpha'$ and $\beta'$ are odd while $2^{k + 1}$ is even, it follows that $x$ is odd and thus, $x \leq 2^{k + 1} - 3$. Therefore, $i = x - j \leq 2^{k + 1} - 3 - 2^k + 1 = 2^k - 2 < j$.
		
		Since $2^k | \qr(j) - \qr(i)$ for some $i$ and $j$ such that $0 \leq i < j < 2^k$, we have $D_{\qr} (2^k) \neq 2^k$, and therefore, $D_{\qr} (n) \neq 2^{\lceil \log_2 n \rceil}$ for $n = 2^k$.
		\item[Case 2: $\alpha' = -\beta'$.] In this case, we have $\qr(0) = \qr(1) = 0$, and so, the sequence cannot even be discriminated.
	\end{description}
	Therefore, we have $D_{\qr} (n) = 2^{\lceil \log_2 n \rceil}$ if and only if $\alpha' = \beta'$.
\end{proof}

Unlike the case with integer coefficients, the discriminators for sequences of the form $(\qr(n))_{n \geq 0} = \left(\frac{\alpha'}{2} n^2 + \frac{\alpha'}{2} n + \gamma\right)_{n \geq 0}$ are not shift-invariant. This is because for any integer $c$, we have
\begin{align*}
	\qr (n + c) &= \frac{\alpha'}{2} (n + c)^2 + \frac{\alpha'}{2} (n + c) + \gamma = \frac{\alpha'}{2} (n^2 + 2nc + c^2) + \frac{\alpha'}{2} (n + c) + \gamma\\
	&= \frac{\alpha'}{2} n^2 + \left(\frac{2 \alpha' c}{2} + \frac{\alpha'}{2}\right) n + \left(\frac{\alpha'}{2} c^2 + \frac{\alpha'}{2} c + \gamma\right).
\end{align*}
where the coefficient of $n^2$ is $\frac{\alpha'}{2}$ while the coefficient of $n$ is $\frac{2 \alpha' c}{2} + \frac{\alpha'}{2} \neq \frac{\alpha}{2}$. Therefore, by Theorem~\ref{qrdisctwo}, the discriminator for $(\qr(n + c)_{n \geq 0}$ cannot be $2^{\lceil \log_2 n \rceil}$, and thus, the discriminator is not shift-invariant.

\section{The case $p \geq 5$}
\label{four}

We leave the case of $p = 3$ to the next section, since the case of $p \geq 5$ is more straightforward and contains some results that are used for $p = 3$ as well. 

In this section we show that for any prime $p \geq 5$, there are no sequences of the form $(q(n))_{n \geq 0} = (\alpha n^2 + \beta + \gamma)_{n \geq 0}$ with $\alpha \neq 0$, whose discriminator is $D_q (n) = p^{\lceil \log_p n \rceil}$ for all $n \geq 1$. 

\subsection{Integer quadratic coefficients}

We begin by considering sequences of the form $(\qp(n))_{n \geq 0} = (c(p^k n^2 + bn))_{n \geq 0}$, for integers $k$, $p$, $c$, and $b$, where $p$ is prime, $p^k \geq 5$, and $\gcd (b, p) = 1$. From Eq.~\eqref{quaddiff}, we see that for all $i$ and $j$, 
\begin{equation*}
	\qp(j) - \qp(i) = (j - i)(\alpha(i + j) + \beta) = c(j - i)(p^k(i + j) + b).
\end{equation*}

First, we present a lemma concerning the factor of $p^k (i + j) + b$ in the equation for $\qp (j) - \qp (i)$. 

\begin{lemma}
	Let $p$, $k$, $b$, $r$ be integers such that $p$ and $r$ are prime, $p^k \geq 5$, $\gcd (p, b) = 1$, $r > |b|$, and $r \equiv \modd{-b} {p^k}$. Then $z = \frac{(p^k - 1)r - b}{p^k}$ is the smallest non-negative integer such that $p^k z + b \equiv \modd{0} {r}$.
	\label{zcounter}
\end{lemma}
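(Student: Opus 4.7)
The plan is to prove the statement by first exhibiting $z$ as a non-negative integer lying in the interval $[0, r)$, and then observing that the set of non-negative integers satisfying $p^k z' + b \equiv 0 \pmod{r}$ forms a single residue class modulo $r$, so the unique representative in $[0, r)$ is the minimum.

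First I would use the congruence $r \equiv -b \pmod{p^k}$ to write $r + b = p^k m$ for some integer $m$. Then I can rewrite
\begin{equation*}
    z \;=\; \frac{(p^k - 1)r - b}{p^k} \;=\; \frac{p^k r - (r + b)}{p^k} \;=\; r - m,
\end{equation*}
which shows $z$ is an integer. A direct substitution then gives $p^k z + b = (p^k - 1)r$, which is obviously divisible by $r$.

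Next I would bound $z$. Since $r > |b|$, we have $r + b > 0$, hence $m \geq 1$, so $z \leq r - 1 < r$. For the lower bound, note that from $p^k m = r + b$ and $r > |b|$ we get $p^k m \leq r + |b| < 2r$, whence $m < 2r/p^k \leq 2r/5 < r$ using $p^k \geq 5$, so $z = r - m > 0$. Thus $0 < z < r$.

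Finally, suppose $z'$ is any non-negative integer with $p^k z' + b \equiv 0 \pmod{r}$. Subtracting from $p^k z + b \equiv 0 \pmod{r}$ gives $p^k (z - z') \equiv 0 \pmod{r}$. Because $\gcd(p, b) = 1$ forces $r \not\equiv 0 \pmod{p}$ via the congruence $r \equiv -b \pmod{p^k}$ (otherwise $p \mid b$), and $r$ is prime, we have $\gcd(p^k, r) = 1$, so $z \equiv z' \pmod{r}$. Thus the non-negative solutions are precisely $z, z + r, z + 2r, \ldots$, and since $z \in [0, r)$ it is the smallest. The main thing to be careful about is the two inequalities $0 < z < r$, which are where the hypotheses $r > |b|$ and $p^k \geq 5$ are actually used; everything else is direct manipulation.
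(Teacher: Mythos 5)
Your proof is correct and follows essentially the same route as the paper: exhibit $z$ as an integer in $[0, r)$ satisfying $p^k z + b \equiv \modd{0} {r}$, and note that since $\gcd(p^k, r) = 1$ the solutions form a single residue class modulo $r$, so the representative in $[0, r)$ is the minimum. Your repackaging via $r + b = p^k m$ and $z = r - m$ is just a cleaner way of carrying out the same bounds the paper derives directly on the numerator $(p^k - 1)r - b$.
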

\begin{proof}
	Since $r \equiv \modd{-b} {p^k}$ and $\gcd (b, p) = 1$, it follows that $\gcd (r, p) = 1$. Now, for the equation $p^k z + b \equiv \modd{0} {r}$, it is clear that $z \equiv \modd{(-b)(p^k)^{-1}} {r}$, and thus, there is only one solution of $z$ in the range $0 \leq z < r$. We now show that this single solution is $z = \frac{(p^k - 1)r - b}{p^k}$. 
	
	First, we observe that for $z = \frac{(p^k - 1)r - b}{p^k}$, the numerator is $p^k r - (r + b) \equiv \modd {0} {p^k}$, since $r \equiv \modd{-b} {p^k}$, thus ensuring that $z$ is an integer. It is also clear that if $z = \frac{(p^k - 1)r - b}{p^k}$, then $p^k z + b \equiv (p^k - 1)r \equiv \modd{0} {r}$. 
	
	Now, since $p^k \geq 5$, it follows that $(p^k - 1) r \geq r$, which further implies $(p^k - 1)r - b > 0$ since $r > |b|$. Furthermore, it is clear that $r + b > 0$ and thus, $p^k r - (r + b) < p^k r$. In other words, we have $0 < z < \frac{p^k r}{p^k} = r$. Hence the result follows.
\end{proof}

We now consider primes whose base-$b$ representation, for some base $b$, have specified prefixes and suffixes. We let $\Sigma_b$ denote the alphabet $\{0, 1, \ldots, b - 1\}$. The notation $[x]_b$ refers to the number that would be written as the string $x$ in base-$b$. 
\begin{lemma}
	Let $b \geq 2$ be an integer and let $x$ and $y$ be finite strings in $\Sigma_b^*$ such that $\gcd(b, [y]_b) = 1$. Then there exist infinitely many strings $w \in \Sigma_b^*$ such that $[xwy]_b$ is prime.
	\label{primefixsufpref} 
\end{lemma}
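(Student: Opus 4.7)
The plan is to reformulate the problem as one about primes in an arithmetic progression confined to a specified interval, and then invoke the prime number theorem in arithmetic progressions.

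I would first set up notation. Let $\ell = |y|$, $y' = [y]_b$, and $x' = [x]_b$. The hypothesis $\gcd(b, y') = 1$ immediately yields $\gcd(b^\ell, y') = 1$. For any $w \in \Sigma_b^k$, direct expansion of the base-$b$ definition gives
\begin{equation*}
    [xwy]_b \;=\; x' \, b^{k + \ell} \;+\; [w]_b \, b^\ell \;+\; y'.
\end{equation*}
As $w$ ranges over $\Sigma_b^k$, the integer $[w]_b$ ranges bijectively over $\{0, 1, \ldots, b^k - 1\}$. Hence, as $w$ varies, $[xwy]_b$ takes precisely those values in $[x' b^{k + \ell}, (x' + 1) b^{k + \ell})$ that are congruent to $y'$ modulo $b^\ell$. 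Since strings of different lengths are automatically distinct, producing infinitely many valid $w$ reduces to showing that for infinitely many $k$, this interval contains a prime in the progression $\{n b^\ell + y' : n \geq 0\}$.

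In the generic case $x' \geq 1$, I would invoke the prime number theorem in arithmetic progressions (e.g.\ Siegel--Walfisz). Since $\gcd(y', b^\ell) = 1$, the count of primes $p \equiv y' \pmod{b^\ell}$ with $p \leq X$ is asymptotic to $X/(\phi(b^\ell) \log X)$. Subtracting the counts at $X_1 = x' b^{k+\ell}$ and $X_2 = (x'+1) b^{k+\ell}$ shows that the number of such primes in $[X_1, X_2)$ is asymptotic to
\begin{equation*}
    \frac{b^{k + \ell}}{\phi(b^\ell) \, \log b^{k + \ell}},
\end{equation*}
which is positive (and tends to infinity) for every sufficiently large $k$. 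This supplies the required prime, and hence a valid $w$, for all large $k$.

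The edge case $x' = 0$ (i.e.\ $x$ is empty or consists entirely of zeros) is easier: the identity collapses to $[xwy]_b = [w]_b \, b^\ell + y'$, so any single prime $p$ in the progression $\{n b^\ell + y' : n \geq 0\}$---whose existence is guaranteed by Dirichlet's theorem alone---already yields infinitely many valid $w$ by padding the base-$b$ representation of $(p - y')/b^\ell$ with arbitrarily many leading zeros to reach each sufficiently large length $k$. The main obstacle is precisely the distinction between these two cases: qualitative Dirichlet is not enough when $x' \geq 1$, because the relevant interval has length only a bounded fraction $1/x'$ of its right endpoint, so one must import a quantitative statement such as PNT in arithmetic progressions as a black box in order to localize a prime of the correct residue class inside it.
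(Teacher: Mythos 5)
Your proof is correct and follows essentially the same route as the paper: write $[xwy]_b = [x]_b\, b^{k+\ell} + [w]_b\, b^{\ell} + [y]_b$, view the admissible values as exactly the integers in the interval $[\,[x]_b\, b^{k+\ell},\ ([x]_b+1)\, b^{k+\ell})$ lying in a fixed residue class, and invoke the quantitative prime number theorem for arithmetic progressions to locate such a prime for every sufficiently large $k$. If anything, your write-up is a bit more careful than the paper's: you work modulo $b^{|y|}$, which pins down the entire suffix $y$ (the paper's argument only tracks the residue modulo $b$), and you handle the degenerate case $[x]_b = 0$ explicitly via Dirichlet and leading-zero padding.
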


\begin{proof}
	For an integer $n \geq 1$, let $\mathcal{P}_{x, y, b, n}$ denote the set of primes of the form $[xwy]_b$ for strings $w \in \Sigma_b^*$ such that $|wy| = n$. These numbers can be represented as $[x]_b \cdot b^n + [wy]_b$. Therefore, they are congruent to $[y]_b \bmod b$ and lie in the interval from $[x]_b \cdot b^n$ to $([x]_b + 1) \cdot b^n$ exclusive.
	
	From Dirchlet's theorem on primes in arithmetic progressions, the number of primes less or equal to $[x]_b \cdot b^n$ and congruent to $[y]_b \bmod b$, denoted by $\pi ([x]_b \cdot b^n, b, [y]_b)$, is approximated by
	\begin{equation*}
		\pi ([x]_b \cdot b^n, b, [y]_b) \approx \frac{1}{\varphi (b)} \li ([x]_b \cdot b^n),
	\end{equation*}
	where $\varphi (b)$ is Euler's totient function, and $\li (m)$ is the logarithmic integral function where $\li (m) = \int_2^m dt/ \log t \approx \frac{m} {\log m}$. Therefore,
	\begin{align*}
		|\mathcal{P}_{x, y, b, n}| &= \pi (([x]_b + 1) \cdot b^n, b, [y]_b) - \pi ([x]_b \cdot b^n, b, [y]_b)\\ 
		&\approx \frac{1}{\varphi (b)} (\li (([x]_b + 1) \cdot b^n) - \li ([x]_b \cdot b^n)) \\
		&\approx \frac{1}{\varphi (b)} \left(\frac{([x]_b + 1) \cdot b^n} {\log ([x]_b + 1) + \log b^n} - \frac{[x]_b \cdot b^n} {\log [x]_b + \log b^n}\right)\\
		&\approx  \frac{([x]_b + 1) \cdot b^n - [x]_b \cdot b^n}{\varphi (b) (\log [x]_b + n \log b)}\\ 
		&= \frac{b^n}{\varphi (b) (\log [x]_b + n \log b)}.
	\end{align*}
	As $n$ grows large, this value approaches $\frac{b^n}{\varphi(b) n \log b}$. The error term for the approximation is known to be bounded by $O([x]_b \cdot b^n e^{-c \lambda ([x]_b \cdot b^n)})$, where $\lambda (m) = (\log m)^{3/5} (\log \log m)^{-1/5}$.
	\begin{align*}
		\lambda ([x]_b \cdot b^n) &= (\log [x]_b \cdot b^n)^{3/5} (\log \log [x]_b \cdot b^n)^{-1/5}\\ 
		&= (\log [x]_b + n \log b)^{3/5} (\log (\log [x]_b + n \log b))^{-1/5}\\
		&= \left(\frac{(\log [x]_b + n \log b)^3}  {\log (\log [x]_b + n \log b)}\right)^{1/5}.\\
	\end{align*}
	As $n$ grows large, we have
	\begin{align*}
		\lambda ([x]_b \cdot b^n) &= \left(\frac{(n \log b)^3}  {\log (n \log b)}\right)^{1/5},\\
		\implies [x]_b \cdot b^n \cdot \exp (-c \lambda ([x]_b \cdot b^n)) &= \frac{b^n}{\frac{1} {[x]_b} \exp (c \left(\frac{(n \log b)^3}  {\log (n \log b)}\right)^{1/5})}.
	\end{align*}
	Here, the denominator of the error term is an exponential function, with growth rate in $\Omega (e^{cn^{2/5}})$. This grows much faster than the denominator of $\frac{b^n}{\varphi(b) n \log b}$, i.e., $\varphi (b) n \log b$, which grows only linearly with $n$. In other words, as $n$ grows large, the upper bound of the error term grows much slower than the approximation $|\mathcal{P}_{x, y, b, n}| \approx \frac{b^n}{\varphi(b) n \log b}$. 
	
	In other words, we have $|\mathcal{P}_{x, y, b, n}| > 0$ for $n$ sufficiently large, i.e. $\mathcal{P}_{x, y, b, n}$ is non-empty. As there are infinitely many sets $\mathcal{P}_{x, y, b, n}$ with $n$ large enough, it follows that there are infinitely many primes of the form $[xwy]_b$ for strings $w \in \Sigma_b^*$.
\end{proof}

We use this lemma to prove the following lemma:
\begin{lemma}
	Let $p$, $k$, $c$, and $b$ be integers such that $p$ is prime, $p \geq 3$, $p^k \geq 5$, and $\gcd (p, b) = 1$. Then for all sequences of the form $(\qp(n))_{n \geq 0} = (c(p^k n^2 + bn))_{n \geq 0}$, there exists a pair of integers, $r$ and $\ell$ such that $r < p^{\ell + 1}$ and $r \nmid \qp (j) - \qp (i)$ for all $i$ and $j$ in the range $0 \leq i < j \leq p^\ell$. 
	\label{qpupperbound}
\end{lemma}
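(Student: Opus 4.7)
The plan is to produce a prime $r$ in the interval $(p^\ell, p^{\ell+1})$, for some suitably chosen $\ell$, so that $r$ divides none of the three factors $c$, $j-i$, and $p^k(i+j)+b$ in
\[
\qp(j) - \qp(i) \;=\; c\,(j-i)\,(p^k(i+j)+b).
\]
The first two divisibilities are handled by arranging $r > |c|$ and $r > p^\ell$. The third is the crux, and here I would appeal to Lemma~\ref{zcounter}: if $r$ is prime with $r > |b|$ and $r \equiv -b \pmod{p^k}$, then the smallest non-negative $z$ with $p^k z + b \equiv 0 \pmod r$ is $z_0 = \frac{(p^k-1)r-b}{p^k}$. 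Since $i+j$ ranges over $\{1, 2, \ldots, 2p^\ell - 1\}$ as $(i,j)$ varies over admissible pairs, I need $z_0 \ge 2p^\ell$, which rearranges to $r \ge \frac{2p^{\ell+k}+b}{p^k-1}$.

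To produce such a prime I would apply Lemma~\ref{primefixsufpref} in base $p$. First, let $y \in \Sigma_p^k$ be the base-$p$ representation of $(-b) \bmod p^k$ padded with leading zeros to exactly $k$ digits; then $[y]_p \equiv -b \pmod{p^k}$, and $\gcd([y]_p,p)=1$ follows from $\gcd(b,p)=1$. Next, fix a prefix $x \in \Sigma_p^{L_x}$ with non-zero leading digit such that $[x]_p$ is strictly greater than $\frac{2p^{L_x+k-1}}{p^k-1}$. A direct check using the hypotheses $p \ge 3$ and $p^k \ge 5$ shows this is feasible, with $L_x = 1$ when $p \ge 5$ and $L_x = 2$ in the remaining case $p = 3$, $k \ge 2$. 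Lemma~\ref{primefixsufpref} then supplies infinitely many $w$ making $[xwy]_p$ prime, and I would select one long enough that $r := [xwy]_p$ exceeds $\max(|b|, |c|)$ and overshoots the threshold by enough to absorb the additive $b$ correction.

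Setting $\ell := |xwy| - 1$, the non-zero leading digit of $x$ gives $p^\ell \le r < p^{\ell+1}$, the choice of $y$ gives $r \equiv -b \pmod{p^k}$, and the choice of $[x]_p$ gives $r \ge \frac{2p^{\ell+k}+b}{p^k-1}$, hence $z_0 \ge 2p^\ell$. Consequently, for any $0 \le i < j \le p^\ell$ we have $1 \le j - i \le p^\ell < r$ (so $r \nmid j-i$), $|c| < r$ (so $r \nmid c$), and $1 \le i+j \le 2p^\ell - 1 < z_0$ combined with Lemma~\ref{zcounter} (so $r \nmid p^k(i+j)+b$). Hence $r \nmid \qp(j) - \qp(i)$, proving the lemma.

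The principal obstacle is arranging all three constraints on $r$ simultaneously: $r$ must lie in the window $(p^\ell, p^{\ell+1})$, be congruent to $-b$ modulo $p^k$, and still occupy the upper portion of that window so that $z_0 \ge 2p^\ell$. This third requirement pushes $r/p^{\ell+1}$ above roughly $\frac{2}{p - p^{1-k}}$, which is strictly less than $1$ exactly when $p^{k-1}(p-2) > 1$—precisely what the hypotheses $p \ge 3$ and $p^k \ge 5$ guarantee, since the borderline case $(p,k) = (3,1)$ is the unique pair they exclude. Once this feasibility is in hand, Lemma~\ref{primefixsufpref} delivers primes with the prescribed base-$p$ digits and hence the prescribed residue class modulo $p^k$, completing the construction.
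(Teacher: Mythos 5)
Your proposal is correct and follows essentially the same route as the paper: both use Lemma~\ref{primefixsufpref} to manufacture a prime $r \equiv -b \pmod{p^k}$ with $r > \max(|b|,|c|)$ sitting high enough in the window below $p^{\ell+1}$, then invoke Lemma~\ref{zcounter} to conclude that the least solution of $p^k z + b \equiv 0 \pmod r$ is at least $2(p^\ell) > i+j$, while $r \nmid c$ and $r \nmid j-i$ are immediate. The only cosmetic difference is that you work in base $p$ with a $k$-digit suffix and an explicitly computed prefix threshold, whereas the paper works in base $p^k$ with a one-digit suffix and leading digit $p^k-1$; the feasibility analysis and the role of the hypotheses $p \ge 3$, $p^k \ge 5$ are the same.
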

\begin{proof}
	Here, we consider the prime numbers such that their first digit in base $p^k$ is $p^k - 1$ and their last digit is equivalent to $-b$ modulo $p^k$. From Lemma~\ref{primefixsufpref}, we know that there are infinitely many such primes that fulfil these conditions, so we choose $r$ to be any of these primes such that $r > \max(|b|, |c|)$. Since the first digit of $r$ in base $p^k$ is $p^k - 1$, this means that there is an integer $u$ such that $(p^k - 1)(p^k)^u \leq r < (p^k)^{u + 1}$. Let $\ell = k(u + 1) - 1$ so that $(p^k)^{u + 1} = p^{\ell + 1}$ and $(p^k)^u = p^{\ell - k + 1}$. Therefore, we have $(p^k - 1) p^{\ell - k + 1} \leq r < p^{\ell + 1}$.
	
	We now show that $r \nmid \qp (j) - \qp (i) = c(j - i)(p^k(i + j) + b)$ for all $i$ and $j$ in the range $0 \leq i < j \leq p^\ell$. It is clear that $r \nmid c$ since $r > |c|$, and that $r \nmid (j - i)$ since $j - i \leq j \leq p^\ell < r$. Thus, it suffices to show that $r \nmid p^k(i + j) + b$ to prove $r \nmid \qp (j) - \qp (i)$.
	
	By contradiction, let us assume that there is some $i$ and $j$ in the range $0 \leq i < j \leq p^\ell$ such that $r | p^k(i + j) + b$. This implies that $p^k (i + j) + b \equiv \modd{0} {r}$. Since $i + j$ must be non-negative, we can apply Lemma~\ref{zcounter} to show that $i + j \geq \frac{(p^k - 1)r - b}{p^k}$. Noting that $r > |b|$, $r \geq (p^k - 1)p^{\ell - k + 1}$, and $p^k \geq 5$, we can deduce that 
	\begin{align*}
		i + j &\geq \frac{(p^k - 1)r - b}{p^k} = \frac{p^k r - r - b}{p^k} \geq \frac{p^k r - r - r}{p^k} = \frac{(p^k - 2) r}{p^k},\\
		&\geq \frac{(p^k - 2) (p^k - 1)p^{\ell - k + 1}}{p^k} = \frac{p^{\ell + 1} (p^{2k} - 3p^k + 2)}{p^{2k}} \geq \frac{p^{\ell + 1} (p^{2k} - 3p^k)}{p^{2k}}\\
		&= p^{\ell + 1} \left(1 - \frac{3}{p^k} \right).
	\end{align*}
	Now, if $p = 3$, then we have $p^k \geq 9$, and so
	\begin{equation*}
		i + j \geq p^{\ell + 1} \left(1 - \frac{3}{p^k} \right) \geq 3p^\ell \left(1 - \frac{3}{9} \right) = 2p^\ell.
	\end{equation*}
	Otherwise, if $p \geq 5$, then
	\begin{equation*}
		i + j \geq p^{\ell + 1} \left(1 - \frac{3}{p^k} \right) \geq 5p^\ell \left(1 - \frac{3}{5}\right) = 2p^\ell.
	\end{equation*}
	In both cases, we have $i + j \geq 2p^{\ell}$, which is a contradiction since $0 \leq i < j \leq p^\ell$. It follows that for all $i$ and $j$ in the range $0 \leq i < j \leq p^\ell$, we have $r \nmid \qp (j) - \qp (i)$.
\end{proof}

Although this section is about the case of $p \geq 5$, the proof for Lemma~\ref{qpupperbound} includes the case of $p = 3$, which is relevant to the next section.

We now show that the discriminator for $(\qp(n))_{n \geq 0}$ is not characterized by $p^{\lceil \log_p n \rceil}$. 

\begin{lemma}
	Let $p$, $k$, $c$, and $b$ be integers such that $p$ is prime, $p \geq 3$, $p^k \geq 5$, and $\gcd (p, b) = 1$. Then, for every sequence of the form $(\qp(n))_{n \geq 0} = (c(p^k n^2 + bn))_{n \geq 0}$, there exists at least one value of $n \geq 1$ such that $D_{\qp} (n) \neq p^{\lceil \log_p n \rceil}$.
	\label{notqpdisc}
\end{lemma}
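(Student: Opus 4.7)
The plan is to derive Lemma~\ref{notqpdisc} as an essentially immediate corollary of Lemma~\ref{qpupperbound}, which has already done the hard work. From Lemma~\ref{qpupperbound}, under the given hypotheses, we obtain a prime $r$ and an integer $\ell$ such that $r < p^{\ell + 1}$ and $r \nmid \qp(j) - \qp(i)$ for every pair $0 \leq i < j \leq p^\ell$. I would set $n = p^\ell + 1$ and argue that this value of $n$ witnesses the failure.

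First, I would observe that the divisibility statement $r \nmid \qp(j) - \qp(i)$ for all $0 \leq i < j \leq p^\ell$ is exactly the assertion that the $p^\ell + 1$ integers $\qp(0), \qp(1), \ldots, \qp(p^\ell)$ are pairwise incongruent modulo $r$. Equivalently, $r$ discriminates the set $\{\qp(0), \qp(1), \ldots, \qp(n-1)\}$ for $n = p^\ell + 1$. Hence by definition of the discriminator,
\begin{equation*}
    D_{\qp}(n) \leq r < p^{\ell + 1}.
\end{equation*}

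Second, I would compute $\lceil \log_p n \rceil$ directly: since $p^\ell < p^\ell + 1 \leq p^{\ell + 1}$, we have $\lceil \log_p n \rceil = \ell + 1$, so $p^{\lceil \log_p n \rceil} = p^{\ell + 1}$. Combining with the previous bound,
\begin{equation*}
    D_{\qp}(n) \leq r < p^{\ell + 1} = p^{\lceil \log_p n \rceil},
\end{equation*}
which gives $D_{\qp}(n) \neq p^{\lceil \log_p n \rceil}$, as required.

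There is no real obstacle here; the entire substance of the argument, namely producing the discriminating prime $r < p^{\ell + 1}$ using Dirichlet-type prime density results combined with the bound from Lemma~\ref{zcounter}, has already been packaged into Lemma~\ref{qpupperbound}. The only conceptual step is recognizing that ``$r$ fails to divide any difference over the first $p^\ell + 1$ terms'' is exactly what is needed to push the discriminator strictly below the target power of $p$, because $r$ itself is strictly below $p^{\ell + 1}$.
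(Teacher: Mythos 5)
Your proposal is correct and follows essentially the same route as the paper's own proof: both invoke Lemma~\ref{qpupperbound} to obtain $r < p^{\ell+1}$ discriminating the first $p^\ell + 1$ terms, and conclude $D_{\qp}(p^\ell + 1) \leq r < p^{\ell+1} = p^{\lceil \log_p (p^\ell + 1) \rceil}$. Your explicit verification that $\lceil \log_p (p^\ell + 1) \rceil = \ell + 1$ is a minor addition the paper leaves implicit; otherwise the arguments coincide.
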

\begin{proof}
	From Lemma~\ref{qpupperbound}, we know there exists a pair of integers $r$ and $\ell$ such that $r < p^{\ell + 1}$ and $r \nmid \qp (j) - \qp (i)$ for all $i$ and $j$ in the range $0 \leq i < j \leq p^\ell$. This implies that $r$ discriminates the first $p^\ell + 1$ terms of $(\qp(n))_{n \geq 0}$.  Therefore, $D_{\qp} (p^\ell + 1) \leq r$. But since $r < p^{\ell + 1}$, it follows that $D_{\qp} (p^\ell + 1) < p^{\ell + 1} = p^{\lceil \log_p (p^\ell + 1) \rceil}$. Thus, for $n =  p^\ell + 1$, we have $D_{\qp} (n) \neq p^{\lceil \log_p n \rceil}$.
\end{proof}

With Lemma~\ref{notqpdisc}, along with Lemma~\ref{generalNOTdisc}, we can show that there are no quadratic sequences with integer coefficients with discriminator $p^{\lceil \log_p n \rceil}$ for primes $p \geq 5$.
\begin{theorem}
	Let $p \geq 5$ be a prime number. Then for every quadratic sequence with integer coefficients, denoted by $(q(n))_{n \geq 0} = (\alpha n^2 + \beta + \gamma)_{n \geq 0}$ with $\alpha \neq 0$, there exists a value of $n \geq 1$ such that $D_q (n) \neq p^{\lceil \log_p n \rceil}$.
\end{theorem}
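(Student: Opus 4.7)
The plan is to combine Lemma~\ref{generalNOTdisc} with Lemma~\ref{notqpdisc}, which together cover every integer-coefficient quadratic with $\alpha \neq 0$. Since the discriminator does not depend on the constant term, I may assume $\gamma = 0$ throughout.

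First, I would apply Lemma~\ref{generalNOTdisc} with the given prime $p \geq 5$. If any of its three hypotheses holds, namely (1) $p \nmid \alpha$, or (2) $p \mid \beta$, or (3) $\alpha = p^k c$ with $c \nmid \beta$, then that lemma directly supplies an $n \geq 1$ with $D_q(n) \neq p^{\lceil \log_p n \rceil}$, and we are done.

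In the remaining case, all three conditions fail simultaneously: $p \mid \alpha$, $p \nmid \beta$, and, writing $\alpha = p^k c$ with $k \geq 1$ and $p \nmid c$, we must have $c \mid \beta$. Setting $b = \beta / c$, the sequence takes the form $q(n) = c(p^k n^2 + b n)$. Since $p \nmid \beta = cb$, we get $\gcd(p, b) = 1$ (if $p \mid b$ then $p \mid cb = \beta$, contradiction). Moreover, since $p \geq 5$ and $k \geq 1$, we have $p^k \geq 5$. These are precisely the hypotheses of Lemma~\ref{notqpdisc}, which then produces an $n \geq 1$ with $D_q(n) \neq p^{\lceil \log_p n \rceil}$.

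Since the substantive content is already packaged into Lemmas~\ref{generalNOTdisc} and~\ref{notqpdisc}, there is no real obstacle in the proof itself; the only thing to check is that the negation of the hypotheses of Lemma~\ref{generalNOTdisc} lines up cleanly with the hypotheses of Lemma~\ref{notqpdisc}, which it does via the easy observation that $p \nmid b$ whenever $\beta = cb$ and $p \nmid \beta$.
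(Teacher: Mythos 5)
Your proposal is correct and follows essentially the same route as the paper: apply Lemma~\ref{generalNOTdisc} when any of its three conditions holds, and otherwise observe that the sequence has the form $c(p^k n^2 + bn)$ with $k \geq 1$, $\gcd(p,b)=1$, and $p^k \geq 5$, so Lemma~\ref{notqpdisc} applies. The extra verification you include (that $p \nmid b$ and $p^k \geq 5$) is a slightly more explicit version of what the paper leaves implicit.
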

\begin{proof}
	We can apply Lemma~\ref{generalNOTdisc} to show that there exists a value of $n \geq 1$ such that $D_q (n) \neq p^{\lceil \log_p n \rceil}$ if any of the following conditions are satisfied:
	\begin{enumerate}
		\item $p \nmid \alpha$;
		\item $p | \beta$;
		\item $\alpha = p^k c$ for some integer $c$ such that $c \nmid \beta$.
	\end{enumerate}
	If neither of these conditions are satisfied, then it follows that $(q(n))_{n \geq 0}$ is of the form $(\qp(n))_{n \geq 0} = (c(p^k n^2 + bn))_{n \geq 0}$ for $k \geq 1$, and $\gcd (p, b) = 1$. We can then apply Lemma~\ref{notqpdisc} to show that $D_q (n) \neq p^{\lceil \log_p n \rceil}$ for some $n \geq 1$.
\end{proof}

This means that for primes $p \geq 5$, there are no integer-valued quadratic sequences of the form $(q(n))_{n \geq 0} = (\alpha n^2 + \beta + \gamma)_{n \geq 0}$ such that $D_q (n) = p^{\lceil \log_p n \rceil}$ for all $n \geq 1$.

\subsection{Half-integer quadratic coefficients}
We further show that for the general case of $(\qr (n))_{n \geq 0} = \left(\frac{\alpha'}{2} n^2 + \frac{\beta'}{2} n + \gamma\right)_{n \geq 0}$, there are still no such sequences with $D_{\qr} (n) = p^{\lceil \log_p n \rceil}$ for all $n \geq 1$ for any prime $p \geq 3$. Recall from Eq.~\eqref{qrdiff} that for all integers $i$ and $j$, we have
\begin{equation*}
	\qr (j) - \qr (i) = (j - i)(\alpha(i + j) + \beta) = \frac{(j - i)(\alpha' (i + j) + \beta')}{2}. \tag{\ref{qrdiff} revisited}
\end{equation*}

\begin{lemma}
	Let $p \geq 3$ be a prime number and let $(\qr (n))_{n \geq 0} = \left(\frac{\alpha'}{2} n^2 + \frac{\beta'}{2} n + \gamma\right)_{n \geq 0}$ for odd integers $\alpha'$, $\beta'$, and $\gamma$ be a quadratic sequence such that $\alpha'$ and $\beta'$ are odd, and any of the following conditions are satisfied:
	\begin{enumerate}
		\item $\alpha' = \beta'$;
		\item $p \nmid \alpha'$;
		\item $p | \beta'$;
		\item $\alpha' = p^k c$ for some integer $c$ such that $c \nmid \beta'$;
		\item $\alpha' = p^k c$ for some integer $c$ such that $c | \beta'$ and $p^k \geq 5$.
	\end{enumerate}
	Then there exists a value of $n > 0$ such that $D_{\qr} (n) \neq p^{\lceil \log_p n \rceil}$.
	\label{qrNOTdiscpee}
\end{lemma}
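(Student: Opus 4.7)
The proof splits on which hypothesis is assumed; conditions 2, 3, and 4 coincide verbatim with the three hypotheses of Lemma~\ref{qrNOTdisc}, so those cases require no new work. For condition~1 ($\alpha' = \beta'$) I would simply take $n = 2$: then $\qr(1) - \qr(0) = (\alpha' + \beta')/2 = \alpha'$, which is odd and nonzero, so $m = 2$ already separates $\qr(0)$ and $\qr(1)$ and hence $D_{\qr}(2) = 2$. Since $p \geq 3$, we have $p^{\lceil \log_p 2 \rceil} = p \neq 2$, so $n = 2$ witnesses the conclusion.

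The substantive case is condition~5. Suppose $\alpha' = p^k c$ with $c \mid \beta'$ and $p^k \geq 5$. If condition~3 also holds we are done, so I assume $p \nmid \beta'$. I would then replace $k$ by the full $p$-adic valuation of $\alpha'$ (still $\geq 5$, and the divisibility $c \mid \beta'$ is preserved) so that $\gcd(p, c) = 1$; writing $b := \beta'/c$ gives $\gcd(p, b) = 1$. From Eq.~\eqref{qrdiff},
\[
	\qr(j) - \qr(i) = \frac{c(j - i)\bigl(p^k(i + j) + b\bigr)}{2},
\]
so the plan is to apply Lemma~\ref{qpupperbound} to the auxiliary integer sequence $\bigl(c(p^k n^2 + bn)\bigr)_{n \geq 0}$, whose hypotheses are now met. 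That lemma produces a prime $r > \max(|b|, |c|)$ (hence odd) and an integer $\ell$ with $r < p^{\ell + 1}$ such that $r \nmid c(j - i)(p^k(i + j) + b)$ for all $0 \leq i < j \leq p^\ell$. Because $r$ is odd, the factor of $2$ in the denominator is harmless, so $r \nmid \qr(j) - \qr(i)$ throughout that range; then $r$ discriminates $\{\qr(0), \ldots, \qr(p^\ell)\}$, giving $D_{\qr}(p^\ell + 1) \leq r < p^{\ell + 1} = p^{\lceil \log_p(p^\ell + 1) \rceil}$, and $n = p^\ell + 1$ works.

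The only real subtlety is the bookkeeping in condition~5: we must first rule out condition~3 to guarantee $\gcd(p, b) = 1$, and choose the decomposition of $\alpha'$ so that $c$ is coprime to $p$. I do not foresee any technical obstacle, since the heavy lifting on both the divisibility shape of $\qr(j) - \qr(i)$ and the existence of the small prime $r$ has been packaged in Lemma~\ref{qrNOTdisc} and Lemma~\ref{qpupperbound} respectively, and the $1/2$ factor has no effect on divisibility by an odd prime.
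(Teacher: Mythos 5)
Your proposal is correct and follows essentially the same route as the paper: conditions 2--4 are delegated to Lemma~\ref{qrNOTdisc}, and condition 5 is handled by writing $\qr(n) = \tfrac{1}{2}c(p^k n^2 + bn)$, invoking Lemma~\ref{qpupperbound}, and taking $n = p^\ell + 1$, exactly as in the paper. The only differences are cosmetic and in your favor: for condition 1 you give a direct parity computation ($D_{\qr}(2) = 2 \neq p$) where the paper cites Theorem~\ref{qrdisctwo}, and your explicit reduction to the case $p \nmid \beta'$ (via condition 3) justifies the hypothesis $\gcd(p, b) = 1$ of Lemma~\ref{qpupperbound}, a point the paper asserts without comment.
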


\begin{proof}
	If $\alpha' = \beta'$, then it was shown in Theorem~\ref{qrdisctwo} that the discriminator is $D_{\qr} (n) = 2^{\lceil \log_2 n \rceil}$. Thus the discriminator does not take values other than powers of 2. Otherwise, the middle three conditions are shown by Lemma~\ref{qrNOTdisc}.
	
	For the final condition, we can express $(\qr(n))_{n \geq 0} = (\frac{1} {2} \qp (n))_{n \geq 0} = (\frac{1} {2} c(p^k n^2 + bn))_{n \geq 0}$, where $k \geq 1$ and $b$ and $c$ are integers with $\gcd (p, b) = 1$. From Lemma~\ref{qpupperbound}, we know that there exists a pair of integers $r$ and $\ell$ such that $r < p^{\ell + 1}$ and $r \nmid \qp (j) - \qp (i)$ for all $i$ and $j$ such that $0 \leq i < j \leq p^\ell$. It follows that $r \nmid \frac{1} {2} (\qp(j) - \qp(i))$ and so, $D_{\qr} (p^\ell + 1) \leq r < p^{\ell + 1} = (p^\ell + 1)^{\lceil \log_p (p^\ell + 1) \rceil}$, and so, $D_{\qr} (n) \neq p^{\lceil \log_p n \rceil}$ for $n = p^\ell + 1$.
\end{proof}

\begin{corollary}
	Let $p \geq 5$ be a prime number. Then for all quadratic sequences of the form $(\qr (n))_{n \geq 0} = \left(\frac{\alpha'}{2} n^2 + \frac{\beta'}{2} n + \gamma\right)_{n \geq 0}$ for integers odd integers $\alpha'$ and $\beta'$, and any integer $\gamma$, there exists a value of $n \geq 1$ such that $D_{\qr} (n) \neq p^{\lceil \log_p n \rceil}$.
\end{corollary}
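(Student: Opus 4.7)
The plan is to reduce the corollary to Lemma~\ref{qrNOTdiscpee} via a straightforward case analysis on how $\alpha'$ factors with respect to $p$. Since $\alpha'$ is odd, in particular $\alpha' \neq 0$, so the factorization $\alpha' = p^k c$ with $\gcd(c,p) = 1$ and $k \geq 0$ is well-defined. We want to show that at least one of the five conditions in Lemma~\ref{qrNOTdiscpee} must hold for every admissible choice of $(\alpha', \beta')$, from which the conclusion $D_{\qr}(n) \neq p^{\lceil \log_p n \rceil}$ for some $n \geq 1$ is immediate.

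First, if $k = 0$, i.e.\ $p \nmid \alpha'$, condition (2) of Lemma~\ref{qrNOTdiscpee} applies and we are done. Otherwise $k \geq 1$ and $p \mid \alpha'$. If additionally $p \mid \beta'$, condition (3) applies. So we may restrict attention to the remaining case $p \mid \alpha'$, $p \nmid \beta'$, with $\alpha' = p^k c$ where $k \geq 1$ and $\gcd(c,p) = 1$. We now split on whether $c \mid \beta'$: if $c \nmid \beta'$, condition (4) applies directly. If instead $c \mid \beta'$, then we invoke the hypothesis $p \geq 5$, which combined with $k \geq 1$ gives $p^k \geq 5$, so condition (5) applies.

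In every subcase one of the conditions of Lemma~\ref{qrNOTdiscpee} is satisfied, and the lemma delivers an $n > 0$ with $D_{\qr}(n) \neq p^{\lceil \log_p n \rceil}$, as required. The only spot that is truly substantive, namely the ``$c \mid \beta'$'' case, is precisely where the restriction $p \geq 5$ is doing real work: it guarantees $p^k \geq 5$ without any extra assumption on $k$, so Lemma~\ref{qpupperbound} (embedded inside Lemma~\ref{qrNOTdiscpee}'s final condition) produces a small prime $r < p^{\ell+1}$ that discriminates the first $p^\ell + 1$ terms. For $p = 3$ this step would fail when $k = 1$, and that is exactly why the next section has to treat $p = 3$ separately; here, however, $p \geq 5$ makes the implication automatic, so there is essentially no obstacle to overcome once the case split is organized.
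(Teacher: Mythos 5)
Your proposal is correct and follows essentially the same route as the paper, which simply observes that for $p \geq 5$ every admissible pair $(\alpha', \beta')$ falls under one of the conditions of Lemma~\ref{qrNOTdiscpee}; your explicit case split on $k$ and on whether $c \mid \beta'$ (noting $k \geq 1$ and $p \geq 5$ force $p^k \geq 5$) is just a fleshed-out version of that observation. Nothing is missing.
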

\begin{proof}
	For $p \geq 5$, all possible cases are covered by Theorem~\ref{qrNOTdiscpee}.
\end{proof}

\section{The case $p = 3$}
\label{five}

We finally turn to quadratic sequences with discriminator $3^{\lceil \log_3 n \rceil}$. In this section, we present a set of necessary conditions and a set of sufficient conditions for a quadratic sequence with integer coefficients to have discriminator $3^{\lceil \log_3 n \rceil}$. 

\subsection{Necessary conditions with integer quadratic coefficients}

We begin with the set of necessary conditions, as described in the following theorem.

\begin{theorem}
	Let $(q(n))_{n \geq 0} = (\alpha n^2 + \beta + \gamma)_{n \geq 0}$ be a quadratic sequence with integer coefficients such that $D_q (n) = 3^{\lceil \log_3 n \rceil}$ for all $n \geq 1$. Then there exist integers $b$ and $c$ such that $\alpha = 3c$, $\beta = bc$, and $3 \nmid bc$. Furthermore, if $b$ is even, then $c$ is also even.
	\label{qtnecessary}
\end{theorem}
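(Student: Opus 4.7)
The plan is to proceed by contrapositive, chaining Lemma~\ref{generalNOTdisc} and Lemma~\ref{notqpdisc} applied to $p = 3$. Since the discriminator does not depend on the constant term, I will assume $\gamma = 0$ throughout.

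First I would apply the three cases of Lemma~\ref{generalNOTdisc} with $p = 3$ in sequence. Case~1 forces $3 | \alpha$; Case~2 then forces $3 \nmid \beta$. Writing $\alpha = 3^k c$ with $k \geq 1$ and $\gcd(3, c) = 1$, Case~3 forces $c | \beta$. Setting $b = \beta/c$ we get $\beta = bc$, and since $3 \nmid \beta$ while $3 \nmid c$, also $3 \nmid b$, so $3 \nmid bc$.

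Next I would rule out $k \geq 2$. In that regime $3^k \geq 9$, and the sequence $q(n) = c(3^k n^2 + bn)$ fits exactly the form $(\qp(n))_{n \geq 0}$ treated in Lemma~\ref{notqpdisc}, whose hypotheses ($p = 3$, $p^k \geq 5$, $\gcd(p, b) = 1$) are all met. That lemma delivers some $n$ with $D_q(n) \neq 3^{\lceil \log_3 n \rceil}$, contradicting the hypothesis; hence $k = 1$ and $\alpha = 3c$ as claimed. For the parity condition, I would just inspect $q(1) - q(0) = \alpha + \beta = c(3 + b)$: if $b$ is even and $c$ is odd then $3 + b$ is odd, making $q(1) - q(0)$ odd, so $2$ discriminates $\{q(0), q(1)\}$ and hence $D_q(2) \leq 2 < 3 = 3^{\lceil \log_3 2 \rceil}$, again a contradiction.

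The main obstacle is really only psychological: the heavy lifting has already been done by Lemma~\ref{notqpdisc}, which is precisely the tool needed to pin the $3$-adic valuation of $\alpha$ down to exactly $1$. Everything else is careful bookkeeping of the three cases of Lemma~\ref{generalNOTdisc} and a one-line parity check at $n = 2$.
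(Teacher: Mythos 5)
Your proposal is correct and follows essentially the same route as the paper: negate the three cases of Lemma~\ref{generalNOTdisc} for $p=3$ to obtain the form $c(3^k n^2 + bn)$ with $3 \nmid bc$, invoke Lemma~\ref{notqpdisc} to exclude $3^k \geq 5$ and force $k=1$, and finish with the parity check on $q(1)-q(0)=c(3+b)$ at $n=2$. No gaps to report.
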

\begin{proof}
	From Lemma~\ref{generalNOTdisc}, we know that $D_q (n) \neq 3^{\lceil \log_3 n \rceil}$ for some $n \geq 1$ if certain conditions are satisfied. Violating the conditions of Lemma~\ref{generalNOTdisc} implies that $p \nmid \beta$ and $\alpha = p^k c$ for integers $k$ and $c$ such that $k \geq 1$ and $c | \beta$. It follows that $D_q (n) = 3^{\lceil \log_3 n \rceil}$ implies that $(q(n))_{n \geq 0} = (p^k cn^2 + bcn)_{n \geq 0}$ where $3 \nmid bc$. 
	
	Now, from Lemma~\ref{notqpdisc}, we know that if $p^k \geq 5$, then $D_q (n) \neq p^{\lceil \log_p n \rceil}$ for some $n \geq 1$. Therefore, we must have $3^k = p^k < 5$ in order for $D_q (n) \neq 3^{\lceil \log_3 n \rceil}$. Since $k \geq 1$, it follows that $k = 1$ and thus, $\alpha = 3c$. 
	
	Finally, if $(q(n))_{n \geq 0} = (3 cn^2 + bcn)_{n \geq 0}$, then $q(0) = 3c(0) + bc(0) = 0$ and $q(1) = 3c(1) + bc(1) = c(3 + b)$. If $b$ is even, then $3 + b$ is odd. If $c$ is also odd, then $q(1)$ is odd, which means that the number 2 discriminates $\{q(0), q(1)\}$, and so, $D_q (2) = 2 \neq 3$, which contradicts $D_q (2) = 3^{\lceil \log_3 (2) \rceil} = 3$. Therefore, if $b$ is even, then $c$ must also be even. 
\end{proof}

These conditions are not sufficient, however. For example, the discriminator of the first four terms of $(3n^2 + 7n)_{n \geq 0}$ is 7 instead of $3^{\lceil \log_3 4 \rceil} = 9$, even though the necessary conditions are fulfilled. 

\subsection{Sufficient conditions with integer quadratic coefficients}

We now derive a set of sufficient conditions by considering the class of sequences of the form $(\qt(n))_{n \geq 0} = (3cn^2 + bcn)_{n \geq 0}$. Provided that $b$ and $c$ satisfy certain restrictions, we show that the discriminator sequence is $D_{\qt} (n) = 3^{\lceil \log_3 n \rceil}$. Some examples of such sequences are $(3n^2 + n)_{n \geq 0}$, $(6n^2 - 4n)_{n \geq 0}$, and $(21n^2 + 49n)_{n \geq 0}$.

Applying Lemma~\eqref{quaddiff} to sequences of the form $(\qt(n))_{n \geq 0} = (3cn^2 + bcn)_{n \geq 0}$ yields
\begin{equation}
	\qt (j) - \qt (i) = (j - i)(\alpha(i + j) + \beta) = (j - i)(3c(i + j) + bc) = c(j - i)(3(i + j) + b).
\end{equation}
Before proving any results relating to the discriminator of such sequences, we first establish the following general lemmas:
\begin{lemma}
	For all positive integers $\ell$ and $k$ such that $\ell < 2(3^k)$, there exists a pair of integers $i$ and $j$ such that $i + j = \ell$ and $0 \leq i < j \leq 3^k$.
	\label{eyejayell}
\end{lemma}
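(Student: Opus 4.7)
The plan is to give a direct constructive proof by splitting into two cases based on how $\ell$ compares to $3^k$. There is essentially no deep obstacle here; the statement is a straightforward pigeonhole-style fact about representing integers as ordered sums, and the only real care is to verify that the strict inequality $i<j$ holds at the boundary.

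First, suppose $1 \le \ell \le 3^k$. I would simply set $i = 0$ and $j = \ell$. Then $i + j = \ell$, clearly $0 \le i$, and $i < j$ since $\ell \ge 1$, and $j = \ell \le 3^k$ by assumption, so all conditions hold.

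Next, suppose $3^k < \ell < 2 \cdot 3^k$. Here I would set $j = 3^k$ and $i = \ell - 3^k$. Then $i + j = \ell$ by construction, $i > 0$ since $\ell > 3^k$, and $i = \ell - 3^k < 3^k = j$ since $\ell < 2 \cdot 3^k$, and $j = 3^k \le 3^k$. So again all conditions are satisfied.

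The only subtlety worth flagging is why the hypothesis $\ell < 2 \cdot 3^k$ (strict) is needed: if $\ell = 2 \cdot 3^k$, then any decomposition $i+j = \ell$ with $j \le 3^k$ would force $i \ge 3^k \ge j$, contradicting $i < j$. So the bound $2 \cdot 3^k$ is sharp, and the two cases above exhaust every admissible $\ell$. I do not expect any step here to require more than a line of justification.
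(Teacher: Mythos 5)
Your proof is correct and follows essentially the same two-case construction as the paper: $i=0$, $j=\ell$ when $\ell\leq 3^k$, and $i=\ell-3^k$, $j=3^k$ otherwise, with $i<j$ guaranteed by $\ell<2(3^k)$. No differences worth noting.
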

\begin{proof}
	If $\ell \leq 3^k$, choose $i = 0$ and $j = \ell$. Otherwise, if $\ell > 3^k$, choose $i = \ell - 3^k$ and $j = 3^k$. In this case, we have $i < j$ since $\ell < 2(3^k)$.
\end{proof}

\begin{lemma}
	Let $u$ and $v$ be integers such that $u \geq 3$, $v \geq 5$ and $v$ is odd, and let $k\geq 2$ be such that $3^k \leq uv < 3^{k + 1}$. Then $u + v - 1 \leq 3^k$.
	\label{youvee}
\end{lemma}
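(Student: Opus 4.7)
The plan is a proof by contradiction: assume $u + v - 1 > 3^k$, equivalently $u + v \geq 3^k + 2$, and split by whether $u = 3$ or $u \geq 4$.

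If $u = 3$, the assumption forces $v \geq 3^k - 1$, while $uv < 3^{k+1}$ gives $v < 3^k$, hence $v = 3^k - 1$. Since $3^k$ is odd, $v = 3^k - 1$ is even, contradicting that $v$ is odd. This is the only step where the parity of $v$ is required; without it, $u = 3, v = 3^k - 1$ would be a genuine counterexample.

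If $u \geq 4$, combining the lower bound $v \geq 3^k + 2 - u$ with $uv < 3^{k+1}$ yields $u(3^k + 2 - u) < 3^{k+1}$, which rearranges to $(u - 3) \cdot 3^k < u(u - 2)$. At $u = 4$ this reads $3^k < 8$, impossible for $k \geq 2$. For $u \geq 5$, dividing by $u - 3 \geq 2$ gives $3^k < u + 1 + \frac{3}{u - 3} \leq u + \frac{5}{2}$. On the other hand, $v \geq 5$ together with $uv < 3^{k+1}$ forces $u < 3^{k+1}/5 = \frac{3}{5}\cdot 3^k$. Substituting yields $3^k < \frac{3}{5}\cdot 3^k + \frac{5}{2}$, i.e., $\frac{2}{5}\cdot 3^k < \frac{5}{2}$, i.e., $3^k < \frac{25}{4}$, again impossible for $k \geq 2$.

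The main obstacle is bookkeeping rather than conceptual difficulty: one must confirm that the derived inequality $(u - 3) \cdot 3^k < u(u - 2)$ is contradicted across the whole permitted range $4 \leq u < 3^{k+1}/5$, with the $u = 4$ boundary needing a separate one-line calculation and the $u \geq 5$ tail handled by the continuous estimate above. The conceptually subtle point is that the bound $u + v - 1 \leq 3^k$ is attained at $u = 3$, $v = 3^k - 2$ (which satisfies every hypothesis), so any proof must invoke the parity of $v$ precisely in the $u = 3$ case.
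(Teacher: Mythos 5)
Your proof is correct, and it differs from the paper's in the way the main case is handled. The paper argues directly: for $u = 3$ it uses the parity of $v$ exactly as you do, and for $u \geq 4$ it invokes the inequality $(u-4)(v-4) \geq 0$ to get $u + v \leq \frac{uv + 16}{4} < \frac{3^{k+1}+16}{4} \leq \frac{3}{4}\cdot 3^k + 4$, which is at most $3^k + 1$ only when $3^k \geq 12$; consequently the paper restricts that computation to $k \geq 3$ and explicitly leaves the finitely many $k = 2$ cases to the reader. You instead argue by contradiction from $u + v \geq 3^k + 2$, deriving $(u-3)\,3^k < u(u-2)$ and then combining the resulting bound $3^k < u + \tfrac{5}{2}$ (for $u \geq 5$) with $u < 3^{k+1}/5$ coming from $v \geq 5$, with a one-line check at $u = 4$. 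What your route buys is uniformity: every case, including $k = 2$, is dispatched by the same inequalities, so nothing is left to a finite verification; what the paper's route buys is brevity in the generic case, since $(u-4)(v-4)\geq 0$ bounds $u+v$ in a single step without the $u = 4$ versus $u \geq 5$ split. Both proofs agree on the essential structural points: the case split at $u = 3$, and the fact that the oddness of $v$ is needed precisely there (as your remark about $v = 3^k - 1$ and the extremal example $u = 3$, $v = 3^k - 2$ makes explicit).
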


\begin{proof}
	
	We leave the finitely many cases of $k = 2$ to the reader. Otherwise, if $k \geq 3$, we have $3^k \geq 27$. There are two cases here:
	\begin{description}
		\item[Case 1: $u = 3$.] Since $v$ is odd, we have $v \leq 3^k - 2$. Therefore, $u + v - 1 \leq 3 + 3^k - 2 - 1 = 3^k$.
		\item[Case 2: $u \geq 4$.] Since $v \geq 5$, it follows that $(u - 4)(v - 4) \geq 0$. This implies that
		\begin{equation*}
			u + v \leq \frac{uv + 16}{4} < \frac{3^{k + 1} + 16}{4} \leq 3^k \cdot \frac{3}{4} + 4.
		\end{equation*}
		Since $3^k \geq 27$, we have $3^k \cdot \frac{3}{4} + 4 \leq 3^k + 1$ and thus, $u + v - 1 \leq 3^k$.
	\end{description}
\end{proof}

We now present the following lemma which enforces a set of constraints on the values of $b$ and $c$ in order to prove the lower bound of $D_{\qt} (n)$ for $n \geq 1$. 
\begin{lemma}
	Let $k \geq 0$ and let $(\qt (n))_{n \geq 0} = (3cn^2 + bn)_{n \geq 0}$ be a quadratic sequence such that $b$ and $c$ are non-zero integers that satisfy all of the following conditions:
	\begin{enumerate}
		\item $b \geq -2$.
		\item $3 \nmid bc$.
		\item If $b$ is even, then $c$ is also even.
		\item If $b$ is odd and there exists an integer $x$ such that $2(3^k) < 2^x < 3^{k + 1}$, $2^x \leq |b|$, and $(b \bmod 2^x) \equiv \modd{0} {3}$, then $c$ is even.
		\item For every prime number $p$ such that $2(3^k) < p < 3^{k + 1}$, $p \leq |b|$, and $(b \bmod p) \equiv \modd{0} {3}$, we have $p | c$. 
	\end{enumerate} 
	Then for all positive integers $m < 3^{k + 1}$, there exists integers $i$ and $j$ such that $0 \leq i < j \leq 3^k$ and $m | \qt (j) - \qt (i)$.
	\label{qtlowerbound}
\end{lemma}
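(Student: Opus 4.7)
The plan is to prove Lemma~\ref{qtlowerbound} by a case analysis on $m < 3^{k+1}$, following the same overall template as the $p = 2$ proofs in Lemmas~\ref{qdlowerbound} and~\ref{trlowerbound}. The central identity is
\[
	\qt(j) - \qt(i) = c(j-i)(3(i+j) + b),
\]
and the fundamental observation is that, since $3 \nmid bc$, the factor $3(i+j) + b$ is coprime to $3$, so every power of $3$ dividing $\qt(j) - \qt(i)$ must come from $(j-i)$. I will therefore write $m = 3^\ell r$ with $\gcd(r,3) = 1$ and $0 \le \ell \le k$, and attempt to choose $(i,j)$ so that $3^\ell \mid j - i$ and so that $r$ divides $c(j-i)(3(i+j) + b)$.

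The easy cases come first. When $r = 1$ I take $(i, j) = (0, 3^\ell)$. When $\ell = 0$ and $r \le 3^k$ I take $(i, j) = (0, r)$, so $m = r \mid j - i$. For the first nontrivial range $\ell = 0$, $3^k < r < 3^{k+1}$ I mimic the $p = 2$ trick: set $x := -b \cdot 3^{-1} \bmod r \in [0, r)$ and apply Lemma~\ref{eyejayell} to realize $i + j = x$ via some $0 \le i < j \le 3^k$. This succeeds when $1 \le x \le 2 \cdot 3^k - 1$. The general case $\ell \ge 1$, $r > 1$ then proceeds similarly, taking $j - i = 3^\ell$ (securing the $3$-part) and picking $i + j$ odd, congruent to $-b \cdot 3^{-1} \pmod r$, and in $[3^\ell, 2 \cdot 3^k - 3^\ell]$; enough room is available provided $r$ is not too large relative to $3^{k-\ell}$.

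The obstacles all come from the ``bad range'' $x \in \{0\} \cup [2 \cdot 3^k, r)$, which only opens up when $r > 2 \cdot 3^k$. This is precisely where conditions~(1), (4), and (5) enter. Using condition~(1) ($b \ge -2$) I will show that for a prime $r = p$ in $(2 \cdot 3^k, 3^{k+1})$, the value $x$ lies in the bad range exactly when $p \le |b|$ and $b \bmod p \equiv 0 \pmod 3$; these are precisely the hypotheses of condition~(5), which then forces $p \mid c$, whereupon $p \mid c(j-i)(3(i+j)+b)$ for \emph{any} valid $(i,j)$. An analogous argument, in which condition~(4) makes $c$ even, handles the case $r = 2^x$ in the bad range, with the parity of $3(i+j)+b$ supplying an additional factor of $2$ when $i + j$ is chosen odd (recall $b$ is odd here, so $3(i+j)+b$ is even iff $i+j$ is odd).

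The main obstacle I expect is the composite non-prime-power case: $m$ coprime to $3$ with $3^k < m < 3^{k+1}$ and $m$ neither prime nor a power of $2$. Here Lemma~\ref{youvee} is the decisive tool. Writing $m = uv$ with $u \ge 3$ and $v \ge 5$ odd, the lemma gives $u + v - 1 \le 3^k$, providing enough slack to assign the factor $u$ of $m$ to $j - i$ and to find $i + j$ in the arithmetic progression $\{-b \cdot 3^{-1} + tv\}_{t \ge 0}$ satisfying both the parity constraint and the range constraint $[u, 2 \cdot 3^k - u]$. Exhaustively covering all factor shapes of $m$ (odd vs.\ even; with or without conditions~(4)--(5) triggered; small vs.\ large factors relative to $3^k$), and in each shape producing a concrete $(i, j)$, will be the technically heaviest part of the proof.
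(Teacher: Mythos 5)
Your outline follows the same route as the paper: reduce to the identity $\qt(j)-\qt(i)=c(j-i)(3(i+j)+b)$, split $m$ by its shape, realize $i+j$ via Lemma~\ref{eyejayell}, use conditions (1), (4), (5) to rescue the ``bad range'' for large prime or power-of-two moduli, and use Lemma~\ref{youvee} for composite $m$. But as written there is a concrete hole: the case $m=2p$ with $p$ prime and $p>3^k$ is covered by none of your devices. It is not prime, not a power of $2$, and it admits no factorization $m=uv$ with $u\geq 3$ and $v\geq 5$ odd, so Lemma~\ref{youvee} cannot be invoked; and the residue $x=-b\cdot 3^{-1}\bmod 2p$ really can land in the bad range while conditions (4)--(5) say nothing about the modulus $2p$. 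For instance, with $k=2$, $m=22$, $b=31$ one gets $x\equiv 19 \pmod{22}$, which exceeds $2\cdot 3^k-1=17$, and your fallback of ``picking $i+j$ odd'' congruent to the right class mod $11$ also fails inside $[1,17]$ (the only candidate is $i+j=8$, which is even). The paper's Case 3c fixes this by solving the congruence only modulo $p$: since $p=m/2<3^{k+1}/2<2\cdot 3^k$, the solution $\ell=-b\cdot 3^{-1}\bmod p$ is automatically in the good range, and the factor $2$ comes for free --- from $c$ when $b$ is even (condition 3), and otherwise from the parity of $(j-i)(3(i+j)+b)$, since $j-i$ and $i+j$ have the same parity so one of the two factors is even. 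You state a parity observation of this kind only for $r=2^x$, and you insist on prescribing the parity of $i+j$, which is exactly what cannot always be done; the correct point is that the parity takes care of itself.

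Two further remarks. First, your claim that for a prime $p\in(2\cdot 3^k,3^{k+1})$ the bad range occurs \emph{exactly} when $p\leq|b|$ and $b\bmod p\equiv 0\pmod 3$ is an overstatement: the converse fails (e.g., $k=2$, $p=23$, $b=44$ gives $x=16<18$), though only the forward implication is needed, so this does not damage the argument. Second, and more importantly, the part you defer as ``technically heaviest'' --- the exhaustive treatment of mixed moduli $2^x3^y$, even composites, the parity conflict when $b$ is even (where condition 3 must supply the factor $2$ because $3(i+j)+b$ and $j-i$ are then forced even/odd incompatibly with an even modulus), and the range estimates that make Lemma~\ref{eyejayell} applicable --- is precisely where the paper's proof does its work (its Cases 2--5), so the proposal in its current form is a plausible plan rather than a proof.
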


\begin{proof}
	Let $m = 2^x 3^y r$ for $x \geq 0$, $y \geq 0$, $2 \nmid r$ and $3 \nmid r$.
	
	There are several cases to consider for the value of $m$.
	\begin{enumerate}
		\item $m \leq 3^k$. Choose $i = 0$ and $j = m$ so that $m | (j - i)|\qt(j) - \qt(i)$.
		
		\item $m$ is a positive power of 2. We can split this further into three cases.
		\begin{enumerate}
			\item $m = 2^x < 2(3^k)$. Let $\ell = -b(3)^{-1} \bmod 2^x$, so that $m | (3 \ell + b)$. If $\ell \neq 0$, then apply Lemma~\ref{eyejayell} so that $i + j = \ell$. Otherwise, if $\ell = 0$, then apply Lemma~\ref{eyejayell} so that $i + j = 2^x$. In both cases, we have $m | \qt (j) - \qt (i)$ and $0 \leq i < j \leq 3^k$.
			\item $m = 2^x \geq 2(3^k)$, $c$ is even. The case of $m = 2$ is trivial. Otherwise, for $m > 2$, let $\ell = -b(3)^{-1} \bmod 2^{x - 1}$. Note that $m | 2(3 \ell + b)$. If $\ell \neq 0$, then apply Lemma~\ref{eyejayell} so that $i + j = \ell$. Otherwise, if $\ell = 0$, then apply Lemma~\ref{eyejayell} so that $i + j = 2^{x - 1}$. In both cases, we have $m | \qt (j) - \qt (i)$ and $0 \leq i < j \leq 3^k$.
			
			\item $m = 2^x \geq 2(3^k)$, $c$ is odd. Note that from conditions 3 and 4 of the lemma statement, we have $b$ odd and $(b \bmod 2^x) \not\equiv \modd{0} {3}$. For $m = 2$, choose $i = 0$ and $j = 1$ so that $3(i + j) + b$ is even. Otherwise, we have $m > 2$. If $b < 0$, let $z = b$. Otherwise, let $z = b \bmod 2^x$. Then it suffices to have $2^x | 3(i + j) + z$ so that $m | \qt (j) - \qt (i)$. Note that $z$ must be odd since $b$ is odd. If $2^x \equiv \modd{z} {3}$, then let $\ell = \frac{2^x - z}{3}$. Otherwise, if $2^x \neq \modd{z} {3}$, we have $2^{x + 1} \equiv \modd{z} {3}$ since $2^x \neq \modd{0} {3}$ and $z \neq \modd{0} {3}$ , so we let $\ell = \frac{2^{x + 1} - z}{3}$. For both cases, note that $\ell > 0$ since $z$ is odd, and also that $z \geq -1$. Therefore, 
			\begin{align*}
				\ell &\leq \frac{2^{x + 1} - z}{3} \leq \frac{2^{x + 1} + 1}{3} = \frac{2(2^x) + 1}{3} \leq \frac{2(3^{k + 1} - 1) + 1}{3}\\
				&= \frac{2(3^{k + 1} - 1)}{3} < \frac{2(3^{k + 1})}{3} = 2(3^k).
			\end{align*}
			Thus, we can apply Lemma~\ref{eyejayell} so that $i + j = \ell$ and so, we have $m | \qt (j) - \qt (i)$ and $0 \leq i < j \leq 3^k$.
		\end{enumerate}
		
		\item $m$ is a prime $\geq 5$ or twice such a prime. We also have three cases here.
		\begin{enumerate}
			\item $m$ is prime and $5 \leq m < 2(3^k)$. Let $\ell = -b(3)^{-1} \bmod m$ so that $m | (3 \ell + b)$. If $\ell \neq 0$, then apply Lemma~\ref{eyejayell} so that $i + j = \ell$. Otherwise, if $\ell = 0$, then apply Lemma~\ref{eyejayell} so that $i + j = m$. In both cases, we have $m | \qt (j) - \qt (i)$ and $0 \leq i < j \leq 3^k$.
			\item $m$ is prime and $m > 2(3^k)$. If $(b \bmod m) \equiv \modd{0} {3}$, we have $m | c$, and so, $m | \qt (j) - \qt (i)$ for any choice of $i$ and $j$. Otherwise, we have $(b \bmod m) \not\equiv \modd{0} {3}$. If $b < 0$, let $z = b$. Otherwise, let $z = b \bmod m$. Then it suffices to have $m | (3(i + j) + z)$ so that $m | \qt (j) - \qt (i)$. If $m \equiv \modd{z} {3}$, then let $\ell = \frac{m - z}{3}$. Otherwise, if $m \neq \modd{0} {3}$ and $z \neq \modd{0} {3}$, then we have $2m \equiv \modd{z} {3}$, so we let $\ell = \frac{2m - z}{3}$. For either case, since $-2 \leq z < m$, we have $\ell \leq \frac{2m - z}{3} \leq \frac{2m + 2}{3}$ and that $m < 3^{k + 1} - 1$, since $m$ is odd. Therefore, 
			\begin{equation*}
				\ell \leq \frac{2m + 2}{3} = \frac{2(m + 1)}{3} < \frac{2(3^{k + 1} - 1 + 1)}{3} = \frac{2(3^{k + 1})}{3} = 2(3^k).
			\end{equation*}
			Thus, we can apply Lemma~\ref{eyejayell} so that $i + j = \ell$ and so, we have $m | \qt (j) - \qt (i)$ and $0 \leq i < j \leq 3^k$.
			
			\item $m = 2p$ for some prime $p \geq 5$. Let $\ell = -b(3)^{-1} \bmod p$ so that $p | 3 \ell + b$. We observe that 
			\begin{equation*}
				\ell < p = \frac{m}{2} < \frac{2m}{3} < \frac{2(3^{k + 1})}{3} = 2(3^k).
			\end{equation*}
			Now, if $\ell = 0$, we apply Lemma~\ref{eyejayell} so that $i + j = p$. Otherwise, if $\ell \neq 0$, we apply Lemma~\ref{eyejayell} so that $i + j = \ell$. For either case, we have $p | 3(i + j) + b$. If $b$ is even, then $c$ is also even. Otherwise, if $b$ is odd, then either $(j - i)$ or $(3(i + j) + b)$ is even. Therefore, $m = 2p | \qt (j) - \qt (i)$. 
		\end{enumerate}
		
		\item $m$ does not have any prime factors except 2 and 3. We split this into four cases.
		\begin{enumerate}
			\item $m = 3^y$ or $m = 2(3^y)$. Clearly $y \leq k$. Choose $i = 0$ and $j = 3^y$. If $b$ is odd, then $3(i + j) + b$ is even. Otherwise, $c$ is even. Therefore, $m | \qt(3^y) - \qt (0)$.
			
			\item $m = 4(3^y)$, $b$ is even. Clearly $y < k$. Choose $i = 0$ and $j = 2(3^y) < 3^{y + 1} \leq 3^k$. Since $b$ is even, $c$ must also be even, and so $m | c(j - i)$.
			
			\item $m = 2^x 3^y$, $x \geq 3$, and $b$ is even. Choose $i = 0$ and $j = 2^{x - 2} 3^y = \frac{m}{4} < \frac{3^{k + 1}}{4} < 3^k$. Then $j - i = 2^{x - 2} 3^y$ while both $c$ and $3(i + j) + b$ are even. Therefore, $m | \qt(j) - \qt (i)$.
			
			\item $m = 2^x 3^y$, $x \geq 2$, and $b$ is odd. Clearly $y < k$. Let $\ell = (-b(3)^{-1} - 3^y) \bmod 2^x$ so that $2^x | 3 (\ell + 3^y) + b$. Note that $\ell$ is even. If $\ell = 0$, choose $i = 2^{x - 1}$. Otherwise, choose $i = \frac{\ell}{2}$. In both cases, choose $j = i + 3^y$ so that $j - i = 3^y$ while $i + j = 2i + 3^y \equiv \modd{\ell + 3^y} {2^x}$, and therefore, $m | (j - i)(3 (i + j) + b) = \qt (j) - \qt (i)$. To verify that $j \leq 3^k$, note that $i \leq 2^{x - 1}$ and so, $j \leq 2^{x - 1} + 3^y$. Since $m = 2^x 3^y < 3^{k + 1}$, it follows that $2^{x - 1} \leq \frac{m}{6} < \frac{3^{k + 1}}{6} = \frac{3^k}{2}$ and so, $j \leq 2^{x - 1} + 3^y < \frac{3^k}{2} + 3^{k - 1} = \frac{5(3^k)}{6} < 3^k$.
		\end{enumerate}
		
		\item For all other possible cases of $m$, we can write $m = uv$ such that $u \geq 3$, $v \geq 5$, and $\gcd (v, 6) = 1$. In this case, let $\ell = (-b(3)^{-1} - u) \bmod v$ where $(3)^{-1}$ is the multiplicative inverse of $3$ modulo $v$, so that $v | 3(\ell + u) + b$. If $\ell$ is even, then choose $i = \frac{\ell}{2}$. Otherwise, if $\ell$ is odd, choose $i = \frac{\ell + v}{2}$. In both cases, choose $j = i + u$ so that $j - i = u$ and 
		\begin{equation*}
			3(i + j) + b = 3(2i + j) + b \equiv 3(\ell + u) + b \equiv \modd{0} {v}.
		\end{equation*}
		Therefore, $m = uv | (j - i)(3(i + j) + b) | \qt (j) - \qt (i)$. To verify that $j \leq 3^k$, note that $0 \leq i < v$ and so, $j \leq u + v - 1$. Thus, we can apply Lemma~\ref{youvee} to show that $j \leq u + v - 1 \leq 3^k$.
	\end{enumerate}
	In all cases, we have $m | \qt (j) - \qt (i)$ for some $i$ and $j$ in the required range.
\end{proof}

Before we move on to the upper bound, we provide some insight on the choices of conditions in Lemma~\ref{qtlowerbound}. 

\begin{itemize}
	\item Condition 4 is due to the argument in Case 2c, where $m$ is a power of 2 such that $2(3^k) < m < 3^{k+1}$, e.g., $m = 64$. Since $b$ is odd, it is impossible for 2 to divide both $(j - i)$ and $(3(i + j) + b)$, so we need $m | c(3(i + j) + b)$. If $z \not\equiv \modd{0} {3}$ as described in Case 2c, then there is no problem. Otherwise, the smallest non-negative solution for $3 \ell + b \equiv \modd{0} {m}$ can have $\ell$ being anywhere from $0$ to $m - 1$, which might be bigger than $2(3^k)$, thus making it impossible for $\ell = i + j$ in some cases. So we need $c$ to be even to allow $m | c(3(i + j) + b)$ then.
	\item Likewise, Condition 5 is due to case 3b, through similar logic. An example that illustrates the need for both Conditions 4 and 5 is $(3n^2 + 75n)_{n \geq 0}$, where the discriminator of the first nineteen terms is 61 (a prime), while the discriminator of the first twenty terms is 64 (a power of 2).
	\item Conditions 4 and 5 also include $2^x \leq |b|$ and $p \leq |b|$ respectively. This is because if $b$ is positive and greater than $2^x$ or $p$ respectively, then we have $z = b$, which implies $z \not\equiv \modd{0} {3}$ due to Condition 2, making Cases 2c and 3b applicable. Furthermore, for any positive $b$, there are finitely many primes $p$ such that $p \leq |b|$, so Condition 5 still ensures that $c$ is bounded.
	\item However, if $b$ is negative, there are some potential problems. This is because even if $3 \nmid b$, there are infinitely many values of $m$ such that $b \bmod m \equiv \modd{0} {3}$ while $m = 2^x$ or $p$ as described in Conditions 4 and 5, if $b$ is negative. The constraints of $2^x \leq |b|$ and $p \leq |b|$ would no longer be sufficient to capture all such scenarios. Even if we try to expand these constraints, there might be infinitely many primes $p$ for which the only solution of $3\ell + b \equiv \modd{0} {p}$ involves $\ell > 2(3^k)$, making it impossible to bound $c$ then.
	\item Despite this, the arguments in Cases 2c and 3b still work for $z \geq -2$. So if $b$ is negative, we can let $z = b$, but we add Condition 1 to ensure that $z = b \geq -2$ in such cases.
\end{itemize} 

Justification for Conditions 2 and 3 are covered by Theorem~\ref{qtnecessary}. We now move on to the upper bound on the discriminator of $(\qt (n))_{n \geq 0}$, which is handled by the following lemma.
\begin{lemma}
	Let $k \geq 0$. For all pairs of integers $i$ and $j$ satisfying $0 \leq i < j < 3^{k + 1}$, we have $3^{k + 1} \nmid \qt (j) - \qt (i)$ if $3 \nmid bc$.
	\label{qtupperbound}
\end{lemma}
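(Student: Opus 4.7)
The plan is to mirror the strategy used for Lemma~\ref{qdupperbound} (the analogous $p = 2$ upper bound), which essentially observes that if the coefficients are arranged so that the ``additive'' factor is coprime to $p$, then all powers of $p$ must come from the difference factor $(j - i)$, which is too small.

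First I would start from the expression $\qt(j) - \qt(i) = c(j - i)(3(i + j) + b)$, which is immediate from Eq.~\eqref{quaddiff}. The hypothesis is that $3 \nmid bc$, so in particular $3 \nmid c$ and $3 \nmid b$.

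Next I would analyze each of the three factors modulo $3$. The factor $c$ contributes no powers of $3$ since $3 \nmid c$. The factor $3(i + j) + b$ is congruent to $b \pmod 3$, and since $3 \nmid b$, this factor also contributes no powers of $3$. Hence every power of $3$ dividing $\qt(j) - \qt(i)$ must already divide $(j - i)$.

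Finally I would conclude by noting that $1 \leq j - i \leq j < 3^{k + 1}$, so $3^{k + 1} \nmid (j - i)$, and therefore $3^{k + 1} \nmid \qt(j) - \qt(i)$. There is essentially no obstacle here: the whole content of the lemma is the elementary fact that the two ``non-$(j-i)$'' factors are coprime to $3$, so the entire argument reduces to a one-line size bound on $j - i$. This makes the proof a direct analogue of Lemma~\ref{qdupperbound} with $2$ replaced by $3$, and with ``parity'' replaced by ``divisibility by $3$.''
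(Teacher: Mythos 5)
Your proposal is correct and matches the paper's proof essentially verbatim: both factor the difference as $c(j-i)(3(i+j)+b)$, note that $3 \nmid c$ and $3(i+j)+b \equiv b \not\equiv 0 \pmod{3}$, and conclude that all powers of $3$ must divide $j-i < 3^{k+1}$. No gaps; nothing further to add.
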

\begin{proof}
	For $\qt (j) - \qt (i) = c (j - i)(3 (i + j) + b)$, it is given that $3 \nmid c$. Since $3 \nmid b$, it follows that $3(i + j) + b \equiv b \not\equiv \modd{0} {3}$, and so, $3 \nmid 3(i + j) + b$. Therefore, any powers of 3 that divide $\qt (j) - \qt (i)$ must divide the $(j - i)$. But $j - i \leq j < 3^{k + 1}$. Therefore, $3^{k + 1} \nmid \qt (j) - \qt (i)$ for all $i$ and $j$ in the range $0 \leq i < j < 3^{k + 1}$.
\end{proof}

We now compute the discriminator of $(\qt (n))_{n \geq 0} = (3cn^2 + bn)_{n \geq 0}$ with the same conditions as Lemma~\ref{qtlowerbound}.
\begin{theorem}
	Let $(\qt (n))_{n \geq 0} = (3cn^2 + bn)_{n \geq 0}$ be a quadratic sequence such that $b$ and $c$ are non-zero integers that satisfy all of the following conditions:
	\begin{enumerate}
		\item $b \geq -2$.
		\item $3 \nmid bc$.
		\item If $b$ is even, then $c$ is also even.
		\item If $b$ is odd and there exists a pair of positive integers $x$ and $k$ such that $2(3^k) < 2^x < 3^{k + 1}$ , $2^x \leq |b|$, and $(b \bmod 2^x) \equiv \modd{0} {3}$, then $c$ is even.
		\item For every prime number $p$ such that $2(3^k) < p < 3^{k + 1}$ for a positive integer $k$, $p \leq |b|$, and $(b \bmod p) \equiv \modd{0} {3}$, we have $p | c$. 
	\end{enumerate} 
	Then the discriminator $D_{\qt}(n)$ satisfies the equation
	\begin{equation}
		D_{\qt}(n) = 3^{\lceil \log_3 n \rceil}
	\end{equation}
	for $n \geq 1$.
	\label{qtdisc}
\end{theorem}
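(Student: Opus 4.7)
The plan is to invoke Lemma~\ref{generaldisc} with $p = 3$, since all the heavy combinatorial work has already been done in the preceding lemmas. That lemma requires two hypotheses on the sequence, and these correspond exactly to the conclusions of Lemmas~\ref{qtlowerbound} and~\ref{qtupperbound}, so the proof reduces to a direct verification that the theorem's hypotheses match what those two lemmas need.

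First I would observe that Conditions 1--5 of the theorem are precisely Conditions 1--5 of Lemma~\ref{qtlowerbound} (the quantifiers over $k$ and $x$ in the theorem's Conditions 4 and 5 accommodate the fact that Lemma~\ref{qtlowerbound} is stated for a single fixed $k$, but we need it to hold for every $k \geq 0$). Therefore, for every $k \geq 0$ and every positive integer $m < 3^{k+1}$, Lemma~\ref{qtlowerbound} supplies integers $i$ and $j$ with $0 \leq i < j \leq 3^k$ and $m \mid \qt(j) - \qt(i)$. This is exactly the first condition of Lemma~\ref{generaldisc} with $p = 3$.

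Next I would invoke Lemma~\ref{qtupperbound}, whose only hypothesis is $3 \nmid bc$, and this is Condition 2 of the theorem. The conclusion yields $3^{k+1} \nmid \qt(j) - \qt(i)$ for all $k \geq 0$ and all $0 \leq i < j < 3^{k+1}$, which is the second condition of Lemma~\ref{generaldisc} with $p = 3$. Combining both verifications, Lemma~\ref{generaldisc} gives $D_{\qt}(n) = 3^{\lceil \log_3 n \rceil}$ for all $n > 0$, as required.

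There is no genuine obstacle here: the theorem is essentially a packaging statement, and all the delicate case analysis (powers of $2$ that exceed $2 \cdot 3^k$, primes between $2 \cdot 3^k$ and $3^{k+1}$ with $b$-residue divisible by $3$, etc.) has been absorbed into Lemma~\ref{qtlowerbound}. The only minor subtlety worth spelling out in the write-up is that the theorem's quantifications ``for a positive integer $k$'' in Conditions 4 and 5 allow a single choice of $b$ and $c$ to work simultaneously for every level $k$ in the induction implicit in Lemma~\ref{generaldisc}; once this is noted, the implication is immediate.
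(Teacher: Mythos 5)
Your proposal is correct and follows exactly the paper's own argument: the theorem is proved by applying Lemma~\ref{generaldisc} with $p = 3$, with its two hypotheses supplied by Lemma~\ref{qtlowerbound} (the conditions matching the theorem's Conditions 1--5, quantified over all $k \geq 0$) and Lemma~\ref{qtupperbound} (using Condition 2). Your remark about the quantification over $k$ is a reasonable clarification but does not change the substance.
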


\begin{proof}
	This follows directly from an application of Lemma~\ref{generaldisc} on $(\qt(n))_{n \geq 0}$ for $p = 3$, where the two conditions of Lemma~\ref{generaldisc} are fulfilled by Lemmas~\ref{qtlowerbound} and~\ref{qtupperbound} respectively.
\end{proof}
We presented a set of conditions for which $D_{\qt}(n) = 3^{\lceil \log_3 n \rceil}$. These conditions, however, are not necessary. A simple example to illustrate this is $(3n^2 + 25n)_{n \geq 0}$, which satisfies all five conditions except Condition 5 for $p = 19$, where $2(3^2) < 19 < 3^3$, $19 \leq |25|$, and $25 \bmod 3 \equiv \modd{0} {3}$, but $19 \nmid 1$. The discriminator is still $3^{\lceil \log_3 n \rceil}$, however. This can be shown by observing that all cases in Lemma~\ref{qtlowerbound} are still applicable, except the case of $m = 19$ with $k = 2$. But even then, we can still set $i = 6$ and $j = 7$ to get $3(i + j) + 25 = 38 \equiv \modd{0} {19}$, and so, $m | \qt (7) - \qt (6)$, satisfying the result of Lemma~\ref{qtlowerbound}. 

It remains an open problem to close the gap between the necessary and sufficient conditions in order to provide a complete characterization of all quadratic sequences with discriminator $3^{\lceil \log_3 n \rceil}$.

\subsection{Half-integer quadratic coefficients}

We briefly discuss the case of $(\qr (n))_{n \geq 0} = \left(\frac{\alpha'}{2} n^2 + \frac{\beta'}{2} n + \gamma\right)_{n \geq 0}$ for odd $\alpha'$ and $\beta'$. Recall Lemma~\ref{qrNOTdiscpee},

\begin{qrpNOTdiscrepeat}
	Let $p \geq 3$ be a prime number and let $(\qr (n))_{n \geq 0} = \left(\frac{\alpha'}{2} n^2 + \frac{\beta'}{2} n + \gamma\right)_{n \geq 0}$ for odd integers $\alpha'$, $\beta'$, and $\gamma$ be a quadratic sequence such that $\alpha'$ and $\beta'$ are odd, and any of the following conditions are satisfied:
	\begin{enumerate}
		\item $\alpha' = \beta'$;
		\item $p \nmid \alpha'$;
		\item $p | \beta'$;
		\item $\alpha' = p^k c$ for some integer $c$ such that $c \nmid \beta'$;
		\item $\alpha' = p^k c$ for some integer $c$ such that $c | \beta'$ and $p^k \geq 5$.
	\end{enumerate}
	Then there exists a value of $n > 0$ such that $D_{\qr} (n) \neq p^{\lceil \log_p n \rceil}$.
\end{qrpNOTdiscrepeat}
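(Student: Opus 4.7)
The plan is to handle the five hypotheses separately, each reducing to a previously proved result with minimal extra work. Since the final assertion is existential ("there exists $n > 0$…"), we are free to pick the value of $n$ that is most convenient for each case.

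For condition (1), where $\alpha' = \beta'$, I would invoke Theorem~\ref{qrdisctwo}, which asserts that under precisely this hypothesis $D_{\qr}(n) = 2^{\lceil \log_2 n \rceil}$ for all $n \geq 1$. Since $p \geq 3$, the value $p^{\lceil \log_p n \rceil}$ is an odd-prime power for every $n \geq 2$, so it cannot equal $2^{\lceil \log_2 n \rceil}$. Taking $n = 2$ already gives $D_{\qr}(2) = 2 < p = p^{\lceil \log_p 2 \rceil}$, so the conclusion follows immediately. For conditions (2), (3), and (4), the statement is literally the content of Lemma~\ref{qrNOTdisc}, so no further argument is required.

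The interesting case is (5), where $\alpha' = p^k c$ with $c \mid \beta'$ and $p^k \geq 5$. Here I would distinguish two subcases. If $p \mid \beta'$, then condition (3) also holds and we are done by the previous paragraph. If $p \nmid \beta'$, write $\beta' = bc$; since $p \nmid \beta'$ and $c$ is a divisor of $\beta'$, we get $\gcd(p, b) = 1$ and $\gcd(p, c) = 1$. Now observe that
\begin{equation*}
    \qr(n) = \tfrac{1}{2}\,c\bigl(p^k n^2 + b n\bigr) + \gamma = \tfrac{1}{2}\,\qp(n) + \gamma,
\end{equation*}
where $\qp$ is exactly the shape of sequence treated in Lemma~\ref{qpupperbound}. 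That lemma furnishes a prime $r$ and an integer $\ell$ with $r < p^{\ell+1}$ such that $r \nmid \qp(j) - \qp(i)$ whenever $0 \leq i < j \leq p^\ell$. Because $r$ is an odd prime (the construction in Lemma~\ref{qpupperbound} selects $r$ via Lemma~\ref{primefixsufpref} with $p^k \geq 5$, so $r$ is odd), the identity $\qr(j)-\qr(i) = \tfrac{1}{2}(\qp(j)-\qp(i))$ transfers non-divisibility: $r \mid \qr(j)-\qr(i)$ would force $r \mid \qp(j)-\qp(i)$, a contradiction. Hence $r$ discriminates $\{\qr(0),\ldots,\qr(p^\ell)\}$, giving $D_{\qr}(p^\ell + 1) \leq r < p^{\ell+1} = p^{\lceil \log_p(p^\ell+1)\rceil}$, which contradicts $D_{\qr}(n) = p^{\lceil \log_p n\rceil}$ at $n = p^\ell + 1$.

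The only real obstacle is ensuring, in case (5), that the reduction to $\qp$ is legitimate: we must arrange $\gcd(p,b) = 1$, which is exactly why the subcase $p \mid \beta'$ has to be absorbed into condition (3). Beyond that bookkeeping, every step is a direct quotation of an already proved lemma, so no new combinatorial argument is needed.
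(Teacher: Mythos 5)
Your proof is correct and follows essentially the same route as the paper: condition (1) via Theorem~\ref{qrdisctwo}, conditions (2)--(4) via Lemma~\ref{qrNOTdisc}, and condition (5) by writing $\qr(n) = \tfrac{1}{2}\qp(n) + \gamma$ and invoking Lemma~\ref{qpupperbound} to get a prime $r < p^{\ell+1}$ discriminating the first $p^\ell + 1$ terms. Your explicit subcase $p \mid \beta'$ inside condition (5) merely makes precise the hypothesis $\gcd(p,b)=1$ that the paper uses implicitly, so it is the same argument with slightly more careful bookkeeping.
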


This covers most cases of $(\qr (n))_{n \geq 0}$ for odd $\alpha'$ and $\beta'$. The only remaining case is when $\alpha' = 3c$ for some integer $c$ such that $c | \beta'$ and $3 \nmid c$. Unlike with integer coefficients, we were unable to find any examples for which $D_{\qr} (n) = 3^{\lceil \log_3 n \rceil}$. 

\begin{conjecture}
	Let $b$ and $c$ be non-zero integers such that $3 \nmid bc$. For all sequences of the form $(\qr(n))_{n \geq 0} = (\frac{3c}{2}n^2 + \frac{bc}{2}n)_{n \geq 0}$ such that $b$ and $c$ are non-zero integers with $3 \nmid bc$, there exists at least one value of $n \geq 1$ such that $D_{\qr} (n) \neq 3^{\lceil \log_3 n \rceil}$. 
\end{conjecture}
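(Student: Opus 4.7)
The relevant hypothesis is that $b$ and $c$ are odd nonzero integers with $3\nmid bc$ (since $\alpha'=3c$ and $\beta'=bc$ must both be odd for $\qr$ to be integer-valued). The plan is a direct witness argument: split on $b\pmod 4$ and, in each case, exhibit a very small $n$ together with a modulus $m<3^{\lceil\log_3 n\rceil}$ that already discriminates the first $n$ terms of $\qr$. The key simplification is that $\gcd(c,6)=1$ (since $c$ is odd and $3\nmid c$), so divisibility of $\qr(j)-\qr(i)=c(j-i)(3(i+j)+b)/2$ by any divisor of $6$ reduces to divisibility of $(j-i)(3(i+j)+b)/2$; the parameter $c$ plays no role in either case.

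\textbf{Case $b\equiv 3\pmod 4$.} I would compute $\qr(1)=c(b+3)/2$. Since $b+3\equiv 2\pmod 4$, the quotient $(b+3)/2$ is odd, and combined with $c$ odd this makes $\qr(1)$ odd, while $\qr(0)=0$ is even. Hence $m=2$ discriminates $\{\qr(0),\qr(1)\}$, so $D_{\qr}(2)\le 2<3=3^{\lceil\log_3 2\rceil}$.

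\textbf{Case $b\equiv 1\pmod 4$.} Combined with $3\nmid b$, this forces $b\equiv 1$ or $5\pmod{12}$. I would claim that $m=6$ discriminates $\{\qr(0),\qr(1),\qr(2),\qr(3)\}$, yielding $D_{\qr}(4)\le 6<9=3^{\lceil\log_3 4\rceil}$. The verification is a finite mod-$12$ check: for each of the six pairs $(i,j)$ with $0\le i<j\le 3$, write out $(j-i)(3(i+j)+b)/2$ and confirm it is nonzero modulo $6$ for both residues $b\equiv 1$ and $b\equiv 5\pmod{12}$. Equivalently, divisibility by $6$ of any of the six expressions would force $b$ into one of the residue classes $3\pmod 4$, $3\pmod{12}$, or $9\pmod{12}$, all excluded by the hypotheses.

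\textbf{Main obstacle.} There is no real obstacle: the argument is elementary and finishes with $n\le 4$. The only care needed is the mod-$12$ bookkeeping in Case~2, which is routine. I would expect the hardest part of the writeup to be organizing the six-pair verification cleanly rather than overcoming any genuine difficulty; in particular, the conceptual machinery developed in Section~\ref{four} (primes in arithmetic progressions, etc.) seems unnecessary here because a uniformly bounded witness $n\le 4$ already suffices.
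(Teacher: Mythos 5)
There is no proof in the paper to compare against: this statement is left as an open conjecture (the author only reports failing to find any example with $D_{\qr}(n)=3^{\lceil\log_3 n\rceil}$). Your argument, read with the intended hypothesis that $b$ and $c$ are odd (the genuinely half-integer case, i.e.\ odd $\alpha'=3c$ and $\beta'=bc$, which is what the surrounding text of the paper means), is correct and complete, and the witnesses really are this small. In Case $b\equiv 3\pmod 4$, $\qr(1)-\qr(0)=c(b+3)/2$ is odd, so $D_{\qr}(2)=2\neq 3$. In Case $b\equiv 1\pmod 4$, the six differences for $0\le i<j\le 3$ are, up to the unit-mod-$6$ factor $c$, equal to $\tfrac{b+3}{2},\, b+6,\, 3\cdot\tfrac{b+9}{2},\, \tfrac{b+9}{2},\, b+12,\, \tfrac{b+15}{2}$, and divisibility of any of these by $6$ forces $b\equiv 9\pmod{12}$, $6\mid b$, $b\equiv 3\pmod 4$, or $b\equiv 3\pmod{12}$, all excluded by $b$ odd, $3\nmid b$, $b\equiv 1\pmod 4$; hence $D_{\qr}(4)\le 6<9=3^{\lceil\log_3 4\rceil}$. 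So your elementary two-case argument actually settles the conjecture affirmatively, which is stronger than anything in the paper; the heavy machinery of Section~\ref{four} is indeed not needed here.

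Two small points to fix in a writeup. First, as literally stated the conjecture allows even $c$, and then it is false: $c=2$, $b=1$ gives $(3n^2+n)_{n\ge 0}$, whose discriminator is $3^{\lceil\log_3 n\rceil}$ by Theorem~\ref{qtdisc}. You should therefore state explicitly that you prove the intended version with $b,c$ odd; your justification that they ``must both be odd for $\qr$ to be integer-valued'' is not quite right (even $c$ also gives integer values), the correct remark being that even $c$ reduces to the integer-coefficient case already treated. Second, when presenting Case~2 you should display the six-pair mod-$12$ (equivalently mod-$6$ after cancelling the factor $2$) verification explicitly rather than asserting it, since that finite check is the entire content of the proof.
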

Proving this conjecture would prove that $D_{\qr} \neq 3^{\lceil \log_3 n \rceil}$ for all sequences of the form $(\qr (n))_{n \geq 0} = \left(\frac{\alpha'}{2} n^2 + \frac{\beta'}{2} n + \gamma\right)_{n \geq 0}$ with odd $\alpha'$ and $\beta'$.

\section{Acknowledgments}

I am deeply grateful to Jeffrey Shallit, who supervised my work on discriminators, patiently guiding me through numerous challenges. In particular, he suggested the approach on how to show that there are no quadratic sequences for the $p \geq 5$ case. 

I am also grateful to Pieter Moree for introducing us to this interesting topic of discriminators, and providing some useful suggestions.

\end{document}